\theoremstyle{definition}
\newtheorem{rem}{Remark}
\newtheorem*{xrem}{Remark}
\theoremstyle{plain}
\newtheorem{corollary}{Corollary}
\newtheorem{proposition}{Proposition}
\newtheorem{ex}{Example}
\newtheorem{theorem}{Theorem}
\newcommand{\sr}[1]{\mathfrak{M}_{#1}}
\newcommand{\norma}[1]{\left\| #1 \right\| }
\newcommand{\abs}[1]{\left| #1 \right| }
\newcommand{\R}{\mathbb{R} }
\newcommand\A[1]{\pmb{#1}}
\title{When is a family of generalized means a scale?}
\author{Pawe\l{} Pasteczka\\
University of Warsaw\\
E-mail: ppasteczka@mimuw.edu.pl}
\date {December 15, 2011}
\begin{document}

\maketitle

\begin{abstract}
For a family $\{k_t \,\vert \,t \in I\}$ of real $\mathcal{C}^2$ 
functions defined on $U$ ($I$, $U$ -- open intervals) and satisfying 
some mild regularity conditions, we prove that the mapping 
$I \ni t \mapsto k_t^{-1}\bigl(\sum_{i=1}^n w_i k_t(a_i)\bigr)$ 
is a continuous bijection between $I$ and $(\min\underline{a}, 
\,\max\underline{a})$, for every fixed non-constant sequence 
$\underline{a} = \bigl(a_i\bigr)_{i=1}^n$ with values in $U$ 
and every set, of the same cardinality, of positive weights 
$\underline{w} = \bigl(w_i\bigr)_{i=1}^n$. In such a situation 
one says that the family of functions $\{k_t\}$ generates 
a {\it scale\,} on $U$. The precise assumptions in our result 
read (all indicated derivatives are with respect to $x \in U$)
\begin{enumerate}[\upshape (i)]
\item $k'_t$ does not vanish anywhere in $U$ for every $t \in I$,
\item $I \ni t \mapsto \frac{k''_t(x)}{k'_t(x)}$ is increasing, 
1--1 on a dense subset of $U$ and onto the image $\R$ 
for every $x \in U$. 
\end{enumerate}
\noindent This result makes possible three things. 1)\,a new and 
extremely short proof of the classical fact that {\it power means\,} 
generate a scale on $(0,+\infty)$, 2)\,a short proof of a fact, which 
is in a direct relation to two results established by Koles\'arov\'a 
in 2001, that, for every strictly increasing convex and $\mathcal{C}^2$ 
function $g \colon (0,\,1) \to (0,\,+\infty)$, the class 
$\{\sr{g_\alpha}\}_{\alpha \in (0,\,+\infty)}$ of quasi-arithmetic 
means (see Introduction for the definition) generated by functions 
$g_\alpha$, $g_\alpha(x) = g(x^\alpha)$, $\alpha \in (0,\,+\infty)$, 
generates a scale on $(0,1)$ between the geometric mean and maximum 
(meaning that, for every $\underline{a}$, $\underline{w}$, if 
$s \in \bigl(\prod_{i = 1}^n a_i^{\,w_i},\,\max(\underline{a})\bigr)$ 
then there exists exactly one $\alpha$ such that 

\noindent $\sr{g_\alpha}(\underline{a},\underline{w}) = s$). 

3)\,a brief proof of one of the classical results of the Italian 
statistics' school from the 1910-20s that the so-called {\it radical 
means\,} generate a scale on $(0,\, +\infty)$. 
\end{abstract}

\section{Introduction}
One of the most popular families of means encountered in the literature 
consists of quasi-arithmetic means. That mean is defined for any continuous 
strictly monotone function $f \colon U \to \R$, $U$ -- an open 
interval. When $\underline{a} = (a_1,\dots,\,a_n)$ is a sequence of 
points in $U$ and $\underline{w} = (w_1,\dots,\,w_n)$ is a sequence 
of {\it weights\,} ($w_i > 0$, $w_1 + \cdots + w_n = 1$), then the 
mean $\sr{} = \sr{f}(\underline{a},\,\underline{w})$ is well-defined 
by the equality 
$$
f(\sr{}) = \sum_{i=1}^n w_i f(a_i)\,.
$$ 
According to \cite[pp.\,158--159]{hlp}, this family of means was dealt 
with for the first time in the papers \cite{deFinetti,kolmogoroff,nagumo} 
in the early thirties of the last century as a natural generalization 
of the power means. Clearly, it is also discussed in the by-now-classical 
encyclopaedic publications \cite{bullen,bvm}. One gets this family, 
containing the most popular means: arithmetic, geometric, quadratic, 
harmonic, by putting 
$$
f_r(x) = \begin{cases} 
x^r & \textrm{if\ } r \ne 0\\
\ln x& \textrm{if\ } r = 0
\end{cases}, 
$$ 
$x \in U = (0, \,+\infty)$, \,$r \in I = \R$. 
\vskip.5mm
We pass now to the notion of scale in the theory of means. 
If a non-constant vector $\underline{a} \in U^n$ and weights $\underline{w}$ 
are fixed then the mapping $f \mapsto \sr{f}(\underline{a},\,\underline{w})$ 
takes continuous monotone functions $f \colon U \to \R$ to the 
interval $(\min\underline{a}, \max\underline{a})$. One is interested in 
finding such families of functions $\{f_i \colon U \to \R\}_{i \in I}$, 
where $I$ is an interval, that for every non-constant vector $\underline a$ 
with values in $U$ and arbitrary fixed corresponding weights $\underline w$, 
the mapping $I \ni i \mapsto \sr{f_i}(\underline{a},\,\underline{w})$ be
a {\it bijection\,} onto $(\min\underline{a},\,\max\underline{a})$. Every 
such a family of means $\sr{f_i}$ is called {\it scale on $U$}. 

The problem of finding conditions, for a family of means, equivalent to 
its being a scale has been discussed for various families. For instance, 
a set of conditions pertinent for Gini means was presented in 
\cite{petrarcicbessack86}. Many results concerning means may be 
expressed in a compact way in terms of scales. Probably the most famous 
is the fact that the family of power means is a scale on $(0,\,+\infty)$. 
It was proved for the first time (for arbitrary weights) in \cite{besso}. 
More about the underlying history, as well as another proof, was given 
in \cite[p.\,203]{bullen}. In the last section of the present note we 
will present a new, extremely short proof of this classical fact.
\section{Comparison of means}
Dealing with means, we would like to know whether (a)\,one mean is not smaller 
than the other, whenever both are defined on the same interval and computed 
on same, but arbitrary, set of arguments. And, when (a) holds true, whether 
(b)\,the two means, evaluated on arguments, are equal only when all components 
in an input $\underline{a}$ are the same: $a_1 = a_2 = \dots = a_n$.
With (a) and (b) holding true, we would say that the first mean is 
{\it greater\,} than the second. 

As long as quasi-arithmetic means are concerned, the comparability 
of $\sr{f}$ and $\sr{g}$ as such turns out to be intimately related 
to the convexity of the function $f \circ g^{-1}$, see items (ii) 
and (iii) in Proposition~\ref{prop:basiccompare} below.

Unfortunately, however, when it comes to scales, the family of objects 
to handle becomes uncountable. Hence one is forced to use another tool, 
allowing to tell something about an uncountable family of means. 
Its concept goes back to a seminal paper \cite{mikusinski}. 
A key operator $A$ from \cite{mikusinski} (recalled below) 
is used in item (i) in our technically crucial 
Proposition~\ref{prop:basiccompare}.

In fact, let $U$ be an interval, $\mathcal{C}^{2\ne}(U)$ be the class of functions 
from $\mathcal{C}^{2}(U)$ with the first derivative vanishing nowhere in $U$. 
Within this class one defines $A \colon \mathcal{C}^{2\ne}(U) \rightarrow 
\mathcal{C}(U)$ by the formula
$$
A(f) = \frac{f''}{f'}\,.
$$
However, the operator $A$ will be used so often as to adopt the convention that,
for $a,\,b,\,c,\,\dots \in \mathcal{C}^{2\ne}(U)$, \,$\A{a},\,\A{b},\,\A{c},\,\dots$ 
\,stand for $A(a),\,A(b),\,A(c),\,\dots$ \,Due to \cite{mikusinski}, this operator has 
wide applications in the comparison of means -- see Proposition~\ref{prop:basiccompare}. 
In fact, it will enable us to compare means in huge families, not only in pairs. 
Precisely this kind of comparison was being advanced by Polish mathematicians in the late 1940s.

One of the most important facts was discovered by Mikusi\'nski, who published 
his result, \cite{mikusinski}, in "Studia Mathematica"\footnote{the flagship journal 
of the pre-war Lvov Mathematical School, established by H.\,Steinhaus and S.\,Banach.}.
It is quite surprising that such a useful result has not been included in the 
referential book \cite{bullen}.

We present both necessary and sufficient conditions, for a family of 
functions $\{k_t\}_{t \in I}$ defined on a common interval $U$, to generate
a scale on $U$. The key conditions in our Theorems~\ref{thm:mainresultL} 
and \ref{thm:mainresultR} are given in terms of the operator $A$. 
Reiterating, it is handy to compare means with its help. 
We begin with
\begin{theorem}
\label{thm:mainresultL}
Let $U$ be an interval, $I=(a,b)$ an open interval, $(k_{\alpha})_{\alpha \in I}$, 
$k_\alpha \in \mathcal{C}^{2\ne}(U)$ for all $\alpha$. 

If $I \ni \alpha \mapsto A(k_\alpha) (x) \in \R$ is increasing and 1--1 
on a dense subset of \,$U$, and is onto for all \,$x \in U$, then 
$(\sr{k_{\alpha}})_{\alpha \in I}$ is an increasing scale on $U$.
\end{theorem}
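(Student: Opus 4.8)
The plan is to fix a non-constant $\underline{a}\in U^n$ with weights $\underline{w}$ and prove three things about the map $\alpha\mapsto\sr{k_\alpha}(\underline{a},\underline{w})$: that it is strictly increasing, that it is continuous, and that its one-sided limits at the endpoints of $I$ are $\min\underline{a}$ and $\max\underline{a}$; a strictly increasing continuous map with these limits is automatically a bijection onto $(\min\underline{a},\max\underline{a})$. A preliminary observation streamlines everything: $\sr{k}$ and $A(k)$ are both invariant under $k\mapsto\lambda k+\mu$ ($\lambda\neq0$), and $A(k)$ determines $k$ up to such an affine change. Hence I may replace each $k_\alpha$ by its normalization with $k_\alpha(x_0)=0$, $k_\alpha'(x_0)=1$ at a fixed $x_0\in[\min\underline{a},\max\underline{a}]$, changing neither $A(k_\alpha)$ nor the mean. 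In this normalization $k_\alpha(x)=\int_{x_0}^x\exp\bigl(\int_{x_0}^u A(k_\alpha)(s)\,ds\bigr)\,du$, so in particular $k_\alpha'>0$ everywhere and each $k_\alpha$ is increasing.

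For strict monotonicity, fix $\alpha_1<\alpha_2$. By hypothesis $A(k_{\alpha_1})(x)\le A(k_{\alpha_2})(x)$ for every $x\in U$, with strict inequality for $x$ in a dense subset. Proposition~\ref{prop:basiccompare} then gives $\sr{k_{\alpha_1}}\le\sr{k_{\alpha_2}}$, and the strict inequality for non-constant $\underline{a}$ is the strict case of that comparison: the composite $k_{\alpha_2}\circ k_{\alpha_1}^{-1}$ has second derivative equal to a positive multiple of $A(k_{\alpha_2})-A(k_{\alpha_1})$, which is $\ge0$ everywhere and $>0$ on a dense set, hence $k_{\alpha_2}\circ k_{\alpha_1}^{-1}$ is strictly convex and Jensen's inequality is strict.

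For the endpoint limits I compare with the exponential functions $e_c(x)=e^{cx}$, for which $A(e_c)\equiv c$ and $\sr{e_c}(\underline{a},\underline{w})=\frac{1}{c}\ln\bigl(\sum_i w_i e^{c a_i}\bigr)\to\max\underline{a}$ as $c\to+\infty$ (and $\to\min\underline{a}$ as $c\to-\infty$). Because $\alpha\mapsto A(k_\alpha)(x)$ is increasing and onto $\R$ for each $x$, we have $A(k_\alpha)(x)\to+\infty$ pointwise as $\alpha\to b^-$; since each $A(k_\alpha)$ is continuous in $x$ and the family is monotone in $\alpha$, a Dini-type compactness argument on $K=[\min\underline{a},\max\underline{a}]$ upgrades this to $\min_K A(k_\alpha)\to+\infty$. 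Thus, given any $c$, for $\alpha$ near $b$ we have $A(k_\alpha)\ge c=A(e_c)$ on $K$, so Proposition~\ref{prop:basiccompare} yields $\sr{k_\alpha}(\underline{a},\underline{w})\ge\sr{e_c}(\underline{a},\underline{w})$; letting $\alpha\to b^-$ and then $c\to+\infty$ forces $\lim_{\alpha\to b^-}\sr{k_\alpha}(\underline{a},\underline{w})=\max\underline{a}$, and the case $\alpha\to a^+$ is symmetric. I expect this step to be the main obstacle: turning the mere unboundedness of $A(k_\alpha)$ into convergence of the mean to the extreme value is the real content, and the device that makes it work is comparison with the constant-$A$ exponential family together with the Dini upgrade from pointwise to uniform divergence.

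Finally, for continuity fix $\alpha_0$ and let $\alpha\to\alpha_0$. For each $x$ the map $\alpha\mapsto A(k_\alpha)(x)$, being increasing and onto $\R$, is continuous, so $A(k_\alpha)(x)\to A(k_{\alpha_0})(x)$; monotonicity in $\alpha$ together with continuity in $x$ makes this convergence locally uniform, and the values stay uniformly bounded on $K$ for $\alpha$ in a compact neighbourhood of $\alpha_0$. Through the integral formula above this gives $k_\alpha\to k_{\alpha_0}$ uniformly on $K$, whence $\sum_i w_i k_\alpha(a_i)\to\sum_i w_i k_{\alpha_0}(a_i)$ and, $k_{\alpha_0}$ being a homeomorphism, $\sr{k_\alpha}(\underline{a},\underline{w})\to\sr{k_{\alpha_0}}(\underline{a},\underline{w})$. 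Combining strict monotonicity, continuity, and the endpoint limits yields that $\alpha\mapsto\sr{k_\alpha}(\underline{a},\underline{w})$ is an increasing continuous bijection onto $(\min\underline{a},\max\underline{a})$, which is precisely the assertion that $(\sr{k_\alpha})_{\alpha\in I}$ is an increasing scale on $U$.
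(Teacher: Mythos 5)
Your proof is correct and follows essentially the same route as the paper's: strict monotonicity via the Mikusi\'nski comparison (Proposition~\ref{prop:basiccompare}), the endpoint limits via a Dini-type upgrade of $A(k_\alpha)(x)\to\pm\infty$ to uniform divergence on $[\min\underline a,\max\underline a]$ followed by comparison with the exponential family $e^{cx}$, and continuity via uniform convergence of the $A(k_\alpha)$ --- which is exactly the content of Proposition~\ref{prop:tenascale}(B) and Corollary~\ref{col:tenascale}(B), to which the paper reduces Theorem~\ref{thm:mainresultL}. The only cosmetic difference is that in the continuity step you bypass the quantitative estimate of Theorem~\ref{thm:rightright} by a direct argument based on the normalization $k_\alpha(x_0)=0$, $k_\alpha'(x_0)=1$.
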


A proof of this theorem is given in Section~\ref{sec:MainResults}. As a matter 
of fact, we will need a wider version of the above theorem. Namely, we extend 
the setup as follows.

In the definition of a scale (see Introduction) one may replace $\min \underline{a}$ 
and $\max \underline{a}$ by arbitrary bounds $L(\underline{a},\,\underline{w})$ 
and $H(\underline{a},\,\underline{w})$ respectively, with some functions $L$ and 
$H$.\footnote{We slightly abuse the notation here, as most of the researchers 
active in the field of means do, e.\,g., in \cite[p.\,61]{bullen}} 
Then such a modified family of means is called a {\it scale between $L$ and $H$}.
Such generalization is very natural and is frequently used, e.\,g. in \cite[pp.\,323,\,364]{bullen}.

Bounds in a scale, in most cases, are either quasi-arithmetic means or $\min$, or $\max$.
In order to make the notation more homogeneous, we introduce two extra symbols $\perp$ 
and $\top$, and write henceforth, purely formally, $\sr{\perp} = \min$ and $\sr{\top} 
= \max$. We also adopt the convention that $A(\perp) = -\infty$ and $A(\top) = +\infty$. 

\vskip1mm
\textbf{Attention.} In some papers scales may as well be decreasing. In fact, 
we do not lose generality if we assume that all scales are increasing, because 
whenever a family $\{k_\alpha\}_{\alpha \in I}$ generates a decreasing scale and 
$\varphi \colon J \rightarrow I$ is continuous, decreasing, 1--1 and onto, then 
the family $\{k_{\varphi(\alpha)}\}_{\alpha \in J}$ generates an increasing scale 
(see, e.\,g., Proposition~\ref{prop:RadicalMeans} in Section~\ref{sec:applications}).

\begin{corollary}[Bounded Scale]
\label{col:borderscale}
Let $l,\,h \in \mathcal{C}^{2\ne}(U) \cup \{\perp,\,\top\}$. Let $U$ and $I = (a,b)$ 
be open intervals, $(k_{\alpha})_{\alpha \in I}$ be a family of functions, 
$k_\alpha \in \mathcal{C}^{2\ne}(U)$ for all $\alpha$. 

If $I \ni \alpha \mapsto A(k_\alpha) (x) \in \R$ is increasing (decreasing), 
1--1 on a dense subset of $U$ and onto $(A(l)(x),\,A(h)(x))$ for all $x \in U$, 
then $(\sr{k_{\alpha}})_{\alpha \in I}$ is an increasing (decreasing) scale 
between $\sr{l}$ and $\sr{h}$.
\end{corollary}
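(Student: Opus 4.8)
The plan is to fix a non-constant $\underline{a}\in U^n$ and corresponding weights $\underline{w}$, write $\Phi(\alpha):=\sr{k_\alpha}(\underline{a},\underline{w})$, and prove that $\Phi$ is a continuous, strictly increasing map of $I=(a,b)$ whose image is precisely the open interval $\bigl(\sr{l}(\underline{a},\underline{w}),\,\sr{h}(\underline{a},\underline{w})\bigr)$; strict monotonicity together with the image being an interval then forces $\Phi$ to be a bijection onto it, which is exactly the assertion. (The decreasing case is identical after reversing orientation, as noted in the remark before the statement.) First I record two consequences of the hypothesis that come for free: since a monotone map onto an interval cannot have jumps, each $\alpha\mapsto A(k_\alpha)(x)$ is in fact a \emph{continuous} increasing bijection of $I$ onto $\bigl(A(l)(x),A(h)(x)\bigr)$, so that $A(k_\alpha)(x)$ increases to $A(h)(x)$ as $\alpha\to b^-$ and decreases to $A(l)(x)$ as $\alpha\to a^+$, for every $x\in U$.

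Strict monotonicity and the two-sided bound are then immediate from the comparison result. For $\alpha<\beta$ we have $A(k_\alpha)\le A(k_\beta)$ on all of $U$, with strict inequality on the dense subset where the maps are $1$--$1$; since $\underline{a}$ is non-constant, Proposition~\ref{prop:basiccompare} gives $\Phi(\alpha)<\Phi(\beta)$. Likewise $A(l)(x)<A(k_\alpha)(x)<A(h)(x)$ holds for \emph{every} $x$, the value lying strictly inside the open image interval, so the same Proposition — read through the conventions $A(\perp)=-\infty$, $A(\top)=+\infty$, $\sr{\perp}=\min$, $\sr{\top}=\max$ — yields $\sr{l}(\underline{a},\underline{w})<\Phi(\alpha)<\sr{h}(\underline{a},\underline{w})$ for all $\alpha$. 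Thus $\Phi$ is strictly increasing into the target interval, and it remains only to identify its image.

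The core of the argument is a continuity-in-$A$ lemma. Up to an affine normalization, which does not affect the mean, each $k_\alpha$ is recovered from $A(k_\alpha)$ by $k_\alpha'=\exp\!\int A(k_\alpha)$ and $k_\alpha=\int k_\alpha'$; moreover $\sr{k_\alpha}(\underline{a},\underline{w})=k_\alpha^{-1}\!\bigl(\sum_i w_i k_\alpha(a_i)\bigr)$ depends on $k_\alpha$ only through its restriction to the compact interval $K=[\min\underline{a},\max\underline{a}]\subset U$. On $K$ a \emph{monotone} pointwise convergence $A(k_\alpha)\to c$ to a continuous limit is uniform by Dini's theorem, which upgrades to $C^1$-convergence of the normalized $k_\alpha$ and hence of $k_\alpha^{-1}$ near the relevant value; consequently $\Phi(\alpha)\to\sr{k}(\underline{a},\underline{w})$ whenever $c=A(k)$ for some $k\in\mathcal{C}^{2\ne}(U)$. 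Applying this with $\alpha\to\alpha_0\in I$ proves that $\Phi$ is continuous, and applying it with $\alpha\to b^-$ (where $c=A(h)$) in the case $h\in\mathcal{C}^{2\ne}(U)$ proves $\lim_{\alpha\to b^-}\Phi(\alpha)=\sr{h}(\underline{a},\underline{w})$; the lower endpoint with $l\in\mathcal{C}^{2\ne}(U)$ is symmetric.

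The one genuinely separate point — and the step I expect to be the main obstacle — is the borderline case $h=\top$ (resp.\ $l=\perp$), where $A(k_\alpha)\to+\infty$ (resp.\ $-\infty$) and Dini no longer applies because there is no limiting function. Here I would argue directly: normalizing so that $k_\alpha'>0$, a uniformly large $A(k_\alpha)$ on $K$ makes $k_\alpha$ arbitrarily convex, which forces the convex combination $\sum_i w_i k_\alpha(a_i)$ to sit within an arbitrarily small fraction of the top of the range $\bigl[k_\alpha(\min\underline{a}),k_\alpha(\max\underline{a})\bigr]$, so that $k_\alpha^{-1}\!\bigl(\sum_i w_i k_\alpha(a_i)\bigr)\to\max\underline{a}=\sr{\top}(\underline{a},\underline{w})$; the concave case gives $\min\underline{a}=\sr{\perp}(\underline{a},\underline{w})$ symmetrically. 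Once both one-sided limits are identified, continuity and strict monotonicity show that $\Phi$ maps $(a,b)$ bijectively onto $\bigl(\sr{l}(\underline{a},\underline{w}),\sr{h}(\underline{a},\underline{w})\bigr)$, i.e.\ $(\sr{k_\alpha})_{\alpha\in I}$ is an increasing scale between $\sr{l}$ and $\sr{h}$, completing the proof.
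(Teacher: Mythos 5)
Your argument is correct, and its skeleton (fix $\underline{a},\underline{w}$, show $\Phi$ is strictly increasing, continuous, pinned strictly between $\sr{l}$ and $\sr{h}$, and identify the one-sided limits) is the same as the paper's, which routes the corollary through part (B) of Corollary~\ref{col:tenascale} and Proposition~\ref{prop:tenascale}. Two of your ingredients are genuinely different, though. First, where you use Dini's theorem to upgrade the monotone pointwise convergence $A(k_\alpha)\to A(k_{\alpha_0})$ (or $\to A(h)$, $A(l)$ when these are finite) to uniform convergence on $[\min\underline{a},\max\underline{a}]$ and then pass to $C^1$-convergence of the normalized $k_\alpha$ by hand, the paper invokes its quantitative Theorem~\ref{thm:rightright}, which bounds $\abs{\sr{f}-\sr{k_n}}$ by $\abs{U}e^{2\norma{\A f}_1}\sinh 2\norma{\A{k}_n-\A f}_1$; your route is softer and self-contained, the paper's gives an explicit modulus of continuity that it reuses elsewhere. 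Second, and more substantively, for the endpoints $h=\top$, $l=\perp$ the paper compares $\sr{k_\beta}$ with $\sr{e^{Mx}}$ and then lets $M\to+\infty$ using the externally imported fact (Remark~\ref{rem:etx}, reduced to power means) that $\{e^{tx}\}\cup\{x\}$ is a scale on $\R$; you instead estimate directly that a uniform lower bound $A(k_\alpha)\ge M$ forces $k_\alpha'$ to grow at least like $e^{M(x-\min\underline{a})}$, so the weighted sum $\sum_i w_ik_\alpha(a_i)$ is eventually dominated by the term at $\max\underline{a}$ and the mean exceeds $\max\underline{a}-\varepsilon$. That replaces an appeal to a known scale by an elementary two-line integral estimate, which is arguably cleaner and removes a dependency. (One cosmetic point: for $x$ outside the dense set where $\alpha\mapsto A(k_\alpha)(x)$ is $1$--$1$, that map is a continuous increasing \emph{surjection} but need not be a bijection; this does not affect your use of it, since you only need the endpoint limits and the strict bounds $A(l)(x)<A(k_\alpha)(x)<A(h)(x)$.)
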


The proof is but a specification of the proof of Theorem~\ref{thm:mainresultL}. 

\begin{xrem}
If, in the above corollary, $l,\,h \in \mathcal{C}^{2\ne}(U)$, then it is enough 
to assume that the mapping $\alpha \mapsto A(k_\alpha) (x)$ be onto for almost 
all $x \in U$. (Then, by Theorem~\ref{thm:rightright}, one gets the convergence 
in $L_1$).
\end{xrem}

The strength of Theorem~\ref{thm:mainresultL} is visible in the following 
exercise. 
\begin{ex}
\label{ex:xax}
Let $U = (\frac{1}{e},\,+\infty)$ and $k_\alpha(x) = x^{\alpha x}$ 
for $\alpha \in \R \,\backslash \,\{0\}$.\\
Find a function $k_0$ such that the completed family 
$(k_\alpha)_{\alpha \in \R}$ generates 
a scale on \,$U$. 
\end{ex}

By the definition of the operator $A$, for $\alpha \ne 0$ there holds
$$
\A{k_\alpha}(x) = \frac{1}{x(\ln x + 1)} + \alpha (\ln x + 1)\,.
$$
In view of Theorem~\ref{thm:mainresultL} we will be done, provided
$\alpha \mapsto \A{k_\alpha}(x)$ is increasing, 1--1 and onto 
$\R$ for all $x \in U$. But 
$$
\R \,\backslash \,\{0\} \ni \alpha \mapsto \A{k_\alpha} (x) \in \R
\,\backslash \,\left\{\frac{1}{x(\ln x + 1)}\right\}\quad \textrm{for all\ }x \in U\,.
$$ 
Hence it is natural to take $k_0 = A^{-1}\Big(\frac{1}{x(\ln x + 1)}\Big)$. 
Then the pattern $A^{-1}(\A{f}) = \int\!e^{\int\!\A{f}}$ gives automatically 
$k_0(x) = x \ln x$.\\
Therefore, an increasing scale on $(\frac{1}{e}, +\infty)$ 
is generated by the family
$$
k_\alpha = \begin{cases}
x \mapsto x^{\alpha x} &\textrm{if\ } \alpha \ne 0\,,\\
x \mapsto x \ln x&\textrm{if\ } \alpha = 0\,.
\end{cases}
$$
Moreover, it is now immediate to note that, in turn, the same family 
of functions generates a \textit{decreasing} scale on $(0,\tfrac{1}{e})$.

\vspace{2mm}
How about a possible reversing of Theorem~\ref{thm:mainresultL}\,? This point 
is rather fine; the existence of a scale implies a somehow weaker set of properties 
than the one assumed in Theorem~\ref{thm:mainresultL}. To the best of author's 
knowledge, the problem of finding a set of conditions {\it exactly\,} equivalent 
to generating a scale is still (and, most likely, widely) open. 

\begin{theorem}
\label{thm:mainresultR}
Let $U$ be an interval, $I = (a,\,b)$ an open interval, $(k_{\alpha})_{\alpha \in I}$, 
$k_\alpha \in \mathcal{C}^{2\ne}(U)$ for all \,$\alpha$. 

If $(\sr{k_{\alpha}})_{\alpha \in I}$ is an increasing scale then 
there exists an open dense subset $X \subset U$ such that the mappping
$I \ni \alpha \mapsto A(k_\alpha) (x) \in \R$ is increasing, 
1--1 and onto for all $x \in X$.
\end{theorem}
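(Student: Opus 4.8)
The plan is to run Proposition~\ref{prop:basiccompare} backwards: it converts inequalities between the means $\sr{k_\alpha}$ into pointwise inequalities between the functions $\A{k_\alpha}$, so every order property of the scale should leave a trace on $\alpha\mapsto\A{k_\alpha}(x)$. Since $\A{k_\alpha}$ and $\sr{k_\alpha}$ are unchanged when $k_\alpha$ is replaced by $ak_\alpha+b$, I first normalise every $k_\alpha$ to be increasing. Monotonicity is then immediate: for $\alpha<\beta$ an increasing scale gives $\sr{k_\alpha}(\underline a,\underline w)\le\sr{k_\beta}(\underline a,\underline w)$ for all admissible $\underline a,\underline w$, whence, by Proposition~\ref{prop:basiccompare}, $\A{k_\alpha}(x)\le\A{k_\beta}(x)$ for every $x\in U$. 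Thus $\alpha\mapsto\A{k_\alpha}(x)$ is non-decreasing on all of $U$.

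For injectivity I would use that an increasing bijection of intervals is strictly increasing: for each non-constant $\underline a$ the map $\alpha\mapsto\sr{k_\alpha}(\underline a,\underline w)$ is a strictly increasing bijection onto $(\min\underline a,\max\underline a)$, so $\sr{k_\alpha}(\underline a,\underline w)<\sr{k_\beta}(\underline a,\underline w)$ whenever $\alpha<\beta$. By the equality clause of Proposition~\ref{prop:basiccompare}, strictness for every non-constant $\underline a$ is equivalent to $k_\beta\circ k_\alpha^{-1}$ carrying no affine arc, i.e.\ to $C_{\alpha,\beta}:=\{x:\A{k_\alpha}(x)=\A{k_\beta}(x)\}$ having empty interior; as the zero-set of the continuous function $\A{k_\beta}-\A{k_\alpha}$ it is closed, hence nowhere dense. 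Monotonicity in $\alpha$ forces $\A{k_\cdot}(x)$ to be constant on $[\alpha,\beta]$ once its endpoints agree, so a point fails strict monotonicity precisely when it lies in some $C_{q,q'}$ with rational $q<q'$ in $I$. Hence the exceptional set $B:=\bigcup_{q<q'\in\mathbb{Q}}C_{q,q'}$ is a countable union of nowhere dense closed sets, and $\alpha\mapsto\A{k_\alpha}(x)$ is strictly increasing off this meager $F_\sigma$ set.

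Surjectivity onto $\R$ splits into unboundedness and absence of jumps. For the former, $S^{+}(x):=\sup_\alpha\A{k_\alpha}(x)$ is lower semicontinuous, so if $\{S^{+}<\infty\}$ had interior, Baire's theorem would give a subinterval $J$ and an integer $n$ with $\A{k_\alpha}(x)\le n$ for all $x\in J$ and all $\alpha$; comparing on $J$ with $g(x)=e^{(n+1)x}$ (so $\A{g}\equiv n+1$) via Proposition~\ref{prop:basiccompare} yields $\sr{k_\alpha}(\underline a,\underline w)\le\sr{g}(\underline a,\underline w)<\max\underline a$ for all $\alpha$ and all non-constant $\underline a$ in $J$, contradicting $\sup_\alpha\sr{k_\alpha}=\max\underline a$. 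Thus $\{S^{+}=+\infty\}$ is a dense $G_\delta$, and the mirror argument with $\inf_\alpha\A{k_\alpha}$ and $g(x)=-e^{-(n+1)x}$ produces $-\infty$ densely. To exclude a jump of $\alpha\mapsto\A{k_\alpha}(x)$ across a value $c$ on a whole subinterval, I would interpose $g$ with $\A{g}\equiv c$: a persistent gap would place $\sr{g}$ between the one-sided limits of $\sr{k_\alpha}$, which, by continuity of the bijective scale map in $\alpha$, coincide, so all the distinct means $\sr{g}$ for $c$ in an interval would have to agree -- impossible. Hence gaplessness too holds off a meager set, and there monotone plus unbounded gives onto $\R$.

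Intersecting these countably generated conditions yields a dense set $X$ on which $\alpha\mapsto\A{k_\alpha}(x)$ is increasing, $1$--$1$ and onto $\R$. The crux throughout is uniformity: Proposition~\ref{prop:basiccompare} compares only fixed pairs, so the genuine work is to combine the pairwise data over the uncountable parameter range into one exceptional set, which the nowhere-density of each $C_{\alpha,\beta}$, the reduction to rational pairs, and Baire category accomplish. The one genuinely delicate endpoint, and the step I expect to be the main obstacle, is that this machinery naturally delivers a residual (dense $G_\delta$) $X$ rather than the literally \emph{open} dense set of the statement: passing to $X=U\setminus\overline{E}$, with $E$ the union of all exceptional layers, requires $E$ to be not merely meager but to have closure with empty interior, and establishing this stronger nowhere-density -- rather than the soft category reasoning -- is where the argument must be pushed hardest.
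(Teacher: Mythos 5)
Your overall route is essentially the paper's. Theorem~\ref{thm:mainresultR} is there deduced from Proposition~\ref{prop:tenascale} and Corollary~\ref{col:tenascale}, whose proofs use exactly your ingredients: the pairwise comparison of Proposition~\ref{prop:basiccompare}, the reduction of the uncountable family of exceptional sets $E_{\alpha,\beta}=\{x:\A{k_\alpha}(x)=\A{k_\beta}(x)\}$ to rational parameter pairs via monotonicity in $\alpha$, and the comparison against $\sr{e^{Mx}}$ on a subinterval to rule out boundedness of $\alpha\mapsto\A{k_\alpha}(x)$ on a set with nonempty interior. The one place you genuinely diverge is the exclusion of jumps: the paper gets this from its condition (a), namely that $\alpha_i\to\alpha$ forces $\A{k_{\alpha_i}}(x)\to\A{k_\alpha}(x)$ for \emph{every} $x\in U$, which it extracts from continuity of $\alpha\mapsto\sr{k_\alpha}(\underline a,\underline w)$ applied to the two-point vectors $(x-\varepsilon,x+\varepsilon)$. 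Your alternative --- interposing $g$ with $\A{g}\equiv c$ --- can be made to work, but as sketched it still needs a Baire step to produce a single rational $c$ and a single parameter $\alpha_0$ whose gap is seen by a dense set of $x$ in one subinterval; a priori the jump location and the missed interval of values vary with $x$, and Proposition~\ref{prop:basiccompare} requires the inequality $\A{k_\alpha}\le c\le\A{k_\beta}$ to hold on a dense subset of a common subinterval before it says anything about the means.

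The genuine gap is the one you flag yourself: the statement asks for an \emph{open} dense $X$, and your construction delivers only a comeager set. A residual set need not contain an open dense one (the irrationals do not), so ``intersecting these countably generated conditions'' does not finish the proof; you must show that the exceptional set has nowhere dense closure, not merely that it is meager. This is precisely the extra work the paper invests in Corollary~\ref{col:tenascale}, where it argues that $X_{\ne}=U\setminus E$ is open because $E$ is claimed to be closed, and that $X_{+\infty}$ is open by writing it as $\bigcap_{n}X_n$ with each $X_s=\{x:\sup_\beta\A{k_\beta}(x)>s\}$ open. You should note that this is also the least convincing point of the published argument --- a countable union of closed nowhere dense sets need not be closed, and a countable intersection of open sets need not be open --- so your instinct that this endpoint is where the argument must be pushed hardest is accurate. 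But an honest admission that the step is missing is still a missing step: as written, your proposal establishes the theorem only with ``open dense'' weakened to ``residual''.
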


A proof of this theorem is given in Section~\ref{sec:MainResults}, 
immediately after the proof of Theorem~\ref{thm:mainresultL}.
\section{Properties and uses of $A$}
In what follows we will extensively use the operator $A$. 
Here we recall, after \cite{mikusinski}, some of its key properties. 
We also rephrase in the terms of $A$ an important result from~\cite{cargo}.

All this will be instrumental in showing that many nontrivial families of 
functions do generate scales. We will also deduce about the limit properties 
of our quasi-arithmetic means, stating a new result (Proposition~\ref{prop:kolesarova}) 
inspired, to some extent, by the paper \cite{kolesarova}.

Regarding scales as such, many examples of them were furnished in \cite[p.\,269]{bullen}.
Scales were also used by the old Italian school of statisticians; see, e.\,g., 
\cite{bonferroni192324,bonferroni192425,bonferroni1927,gini,pizzetti,ricci}. 
One of significant results from that last group of works will be presented, 
with a new and compact proof, in Proposition~\ref{prop:RadicalMeans}. That 
new approach will, we hope, show how quickly one can nowadays prove old 
results.

\begin{rem}\label{rem:increasing}
Let $U$ be an interval and $f,\,g \in \mathcal{C}^{2\ne}(U)$. 
Then the following conditions are equivalent:
\begin{enumerate}[\upshape (i)]
\item $A(f)(x) = A(g)(x)$ for all $x \in U$\,,
\item $f = \alpha g + \beta$ \,for some \,$\alpha,\,\beta \in \R$, 
\,$\alpha \ne 0$\,,
\item $\sr{f}(\underline{a},\,\underline{w})=\sr{g}(\underline{a},\,\underline{w})$
for all vectors $\underline{a} \in U^n$ and arbitrary corresponding weights 
$\underline{w}$ 
\end{enumerate} 
(see, for instance, \cite[p.\,66]{hlp}, \cite{mikusinski}).
\end{rem}

Let $f$ be a strictly monotone function such that $f \in \mathcal{C}^1(U)$ 
and $f'(x) \ne 0$ for all $x \in U$. Then there either holds $f'(x) < 0$ 
for all $x \in U$, or else $f'(x) > 0$ for all $x \in U$. So we define 
the sign $\mathrm{sgn}(f')$ of the first derivative of $f$ to be 
$\mathrm{sgn}(f')(x)$, where $x$ is any point in $U$. The key tool 
in our approach is 

\begin{proposition}[Basic comparison]\label{prop:basiccompare}
Let $U$ be an interval, $f,\,g \in \mathcal{C}^{2\ne}(U)$. 
Then the following conditions are equivalent: 
\begin{enumerate}[\upshape (i)]
\item $A(f) > A(g)$ on a dense set in $U$\,,
\item $(\mathrm{sgn} f') \cdot (f \circ g^{-1})$ is strictly convex\,,
\item $\sr{f}(\underline{a},\,\underline{w}) \ge \sr{g}(\underline{a},
\,\underline{w})$ for all vectors $\underline a \in U^n$ and weights 
$\underline w$, with both sides equal only when $\underline a$ is 
a constant vector.
\end{enumerate}
\end{proposition}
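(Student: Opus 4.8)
The plan is to establish the cycle of implications $(ii)\Rightarrow(iii)\Rightarrow(i)\Rightarrow(ii)$, since each link is most natural in that direction. Throughout I would exploit the fact that replacing $f$ by $-f$ leaves $A(f)$, the convexity statement in $(ii)$ (because $\mathrm{sgn}(f')$ flips together with $f\circ g^{-1}$), and the mean $\sr{f}$ all unchanged; hence I may reduce to the case $\mathrm{sgn}(f')=\mathrm{sgn}(g')=+1$, i.e.\ both $f$ and $g$ strictly increasing, and then $(ii)$ simply reads ``$f\circ g^{-1}$ is strictly convex.''

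\textbf{From $(ii)$ to $(iii)$.} Assuming $f,g$ increasing and $h:=f\circ g^{-1}$ strictly convex, put $b_i:=g(a_i)$, so that $\sr{g}=g^{-1}\bigl(\sum w_i b_i\bigr)$. Applying $f$ to the claimed inequality $\sr{f}\ge\sr{g}$ and using that $f$ is increasing, the assertion becomes $\sum w_i h(b_i)\ge h\bigl(\sum w_i b_i\bigr)$, which is exactly Jensen's inequality for the convex function $h$; strict convexity gives equality precisely when all $b_i$ coincide, i.e.\ when $\underline a$ is constant. This step is routine.

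\textbf{From $(iii)$ to $(i)$.} Here I argue by contraposition, showing that if $(i)$ fails then so does $(iii)$. If $A(f)>A(g)$ fails on a dense set, then by continuity of $A(f)-A(g)$ the set where $A(f)\le A(g)$ contains an open subinterval $V\subset U$; in fact either $A(f)<A(g)$ somewhere, or $A(f)\equiv A(g)$ on an interval. In the first case, I apply the already-proven equivalence $(ii)\Leftrightarrow$ strict-convexity (with the roles reversed, comparing $g$ to $f$ on $V$) to produce arguments $\underline a$ drawn from $V$ for which $\sr{f}<\sr{g}$ strictly, contradicting $(iii)$; in the borderline case $A(f)\equiv A(g)$ on $V$, Remark~\ref{rem:increasing} yields $f=\alpha g+\beta$ on $V$, whence $\sr f=\sr g$ for nonconstant $\underline a$ supported in $V$, again contradicting the strictness in $(iii)$.

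\textbf{From $(i)$ to $(ii)$.} With $f,g$ increasing, I would show $h=f\circ g^{-1}$ is strictly convex by computing $A$ along a change of variables. The clean route is to note that $A$ behaves well under composition: writing $y=g(x)$, a direct differentiation gives $h''(y)/h'(y)=\bigl(A(f)(x)-A(g)(x)\bigr)/g'(x)$. Since $g'$ has constant sign (positive under our reduction), the hypothesis $A(f)>A(g)$ on a dense set forces $h''(y)/h'(y)>0$ on a corresponding dense set, hence $h''>0$ densely, and a standard argument (a $\mathcal C^2$ function with $h''\ge0$ everywhere and $h''>0$ on a dense set has nowhere-vanishing increments of $h'$, so $h'$ is strictly increasing) yields strict convexity of $h$. \textbf{The main obstacle} is precisely this density bookkeeping: one must derive a \emph{global} strict-convexity conclusion from an inequality assumed only on a dense set, so the delicate point is verifying that the nonstrict inequality $A(f)\ge A(g)$ propagates to \emph{all} of $U$ by continuity while the strict part survives densely, and that this suffices for $h'$ to be genuinely strictly monotone rather than merely nondecreasing.
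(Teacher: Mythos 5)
Your argument is essentially correct, and it is worth noting that the paper does not actually prove Proposition~\ref{prop:basiccompare} at all: it delegates the equivalence of (i) and (iii) to Mikusi\'nski's paper and that of (ii) and (iii) to Cargo--Shisha, so your self-contained proof is necessarily a different (and more complete) route. The individual links check out: the reduction to $f,g$ increasing via $f\mapsto -f$, $g\mapsto -g$ is legitimate (each substitution fixes $A$, the mean, and the signed convexity condition); the Jensen step for (ii)$\Rightarrow$(iii) is standard; the identity $h''(y)/h'(y)=\bigl(A(f)(x)-A(g)(x)\bigr)/g'(x)$ with $y=g(x)$ is correct; and the density bookkeeping you flag as the main obstacle is in fact unproblematic, since $A(f)-A(g)$ is continuous, so strict positivity on a dense set forces $h''\ge 0$ everywhere with $\{h''>0\}$ open and dense, whence $h'(y_2)-h'(y_1)=\int_{y_1}^{y_2}h''>0$. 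The one point you should tidy up is the logical order: your step (iii)$\Rightarrow$(i) is not really the second link of a cycle, because in its first case you need to pass from ``$A(g)>A(f)$ on a subinterval $W$'' to ``$g\circ f^{-1}$ strictly convex on $f(W)$,'' which is exactly the implication (i)$\Rightarrow$(ii) (applied to the reversed pair on $W$), followed by (ii)$\Rightarrow$(iii). So you should prove (ii)$\Rightarrow$(iii) and (i)$\Rightarrow$(ii) first and only then derive (iii)$\Rightarrow$(i) by contraposition from the other two; as written, the ``cycle'' presentation quietly uses its own last link. This is a matter of arrangement, not of substance --- all three implications are established independently of one another where they need to be --- so the proof stands once reordered.
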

For the equivalence of (i) and (iii), see \cite[p.\,95]{mikusinski} (this 
characterization of comparability of means had, in the same time, been obtained
independently by S.\,{\L}ojasiewicz -- see footnote 2 in \cite{mikusinski}). 
For the equivalence of (ii) and (iii), see, for instance, \cite[p.\,1053]{CS64}. 
\vskip1mm
In the course of comparing means, one needs to majorate the difference between 
two means. If the interval $U$ is unbounded then, of course, the difference between 
any given two means can be unbounded (for example such is the difference between 
the arithmetic and geometric mean). In order to eliminate this drawback, we will 
henceforth suppose that the means are always defined on a compact interval. It will 
be with no loss of generality, because it is easy to check that a family of means 
defined on $U$ is a scale on $U$ if and only if those means form a scale on $D$,
when treated as functions $D \rightarrow \R$, for every closed subinterval 
$D \subset U$. Indeed, if $\underline a$ is a vector with values in $U$, then 
$\underline a$ is also a vector with values in $D$ for some closed subinterval 
$D$ of \,$U$. 

So, from now on, we have $U$ -- a compact interval, $g \in \mathcal{C}^{2\ne}(U)$ 
increasing, and $\A{g} \in L_1(U)$. The following theorem is of utmost technical 
importance. 

\begin{theorem}\label{thm:rightright}
Let $U$ be a closed bounded interval. If, for $n \in \mathbb{N}$, $\A{f} \colon 
U \rightarrow \R$, $\A{k_n} \in \mathcal{C}(U)$ and $\A{k_n}\xrightarrow[L_1]{}\A{f}$ 
then $\sr{k_n} \rightrightarrows \sr{f}$ uniformly with respect to $\underline{a}$ 
and $\underline{w}$. Moreover,
$$
\abs{\sr{f}(\underline{a},\underline{w})-\sr{k_n}(\underline{a},\underline{w})} 
\le \abs{U} e^{2\norma{\A f}_1} \sinh 2 \norma{\A{k}_n-\A{f}}_1
$$
for all $\underline{a}$ and $\underline{w}$ ($\norma{\cdot}_1$ 
is taken in the space $L_1(U)$).
\end{theorem}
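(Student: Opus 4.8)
The plan is to squeeze the whole statement out of one estimate for $\abs{\sr{f}(\underline a,\underline w)-\sr{k_n}(\underline a,\underline w)}$ whose right-hand side does not see $\underline a$ or $\underline w$ at all. Write $m=\sr{f}(\underline a,\underline w)$ and $M=\sr{k_n}(\underline a,\underline w)$, so that $f(m)=\sum_i w_i f(a_i)$ and $k_n(M)=\sum_i w_i k_n(a_i)$, with $m,M\in[\min\underline{a},\max\underline{a}]\subseteq U$. Because $\sr{f}$ and $\sr{k_n}$ are unchanged under $f\mapsto\alpha f+\beta$ and $A$ is unaffected by such replacements (Remark \ref{rem:increasing}), I would first normalize both generators to be increasing with $f'(p)=k_n'(p)=1$, $p$ being the left endpoint of $U$. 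Since $\A{f}=(\log f')'$ a.e.\ and $\A{f}\in L_1(U)$, integrating gives $f'(x)=\exp\Phi(x)$ and $k_n'(x)=\exp\Psi(x)$, where $\Phi(x)=\int_p^x\A{f}$ and $\Psi(x)=\int_p^x\A{k_n}$ are continuous.

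The key step is an exact identity obtained by subtracting the two defining equations. Using $f(a_i)-f(M)=\int_M^{a_i}f'$ together with $\sum_i w_i\int_M^{a_i}k_n'=0$ and $\sum_i w_i=1$, one gets
$$
f(M)-f(m)=\sum_i w_i\bigl(f(M)-f(a_i)\bigr)=\sum_i w_i\int_M^{a_i}(k_n'-f').
$$
Each interval $[M,a_i]$ lies in $U$, so the right-hand side is at most $\norma{k_n'-f'}_1$ in absolute value, while the modulus of the left-hand side is $\abs{\int_m^M f'}\ge\abs{M-m}\,\min_U f'$. This produces the clean, weight- and argument-free bound
$$
\abs{M-m}\le\frac{\norma{k_n'-f'}_1}{\min_U f'},
$$
which is exactly where all dependence on $\underline a$ and $\underline w$ evaporates.

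It then remains to translate this into the $A$-language. Here I would use the oscillation estimate $\abs{\Phi(x)-\Phi(y)}\le\norma{\A{f}}_1$, which gives simultaneously $\min_U f'=e^{\min\Phi}\ge e^{-\norma{\A{f}}_1}$ and $e^{\Phi(x)}\le e^{\norma{\A{f}}_1}$; writing $k_n'-f'=f'\bigl(e^{\Psi-\Phi}-1\bigr)$ with $\abs{\Psi(x)-\Phi(x)}\le\norma{\A{k_n}-\A{f}}_1$ then controls the remaining factor. Multiplying these three estimates and integrating over $U$ yields
$$
\abs{M-m}\le\abs{U}\,e^{2\norma{\A{f}}_1}\bigl(e^{\norma{\A{k_n}-\A{f}}_1}-1\bigr),
$$
which is dominated by the stated bound through the elementary inequality $e^t-1\le\sinh 2t$ ($t\ge0$). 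Finally, this upper bound is independent of $\underline a$ and $\underline w$, the factor $e^{2\norma{\A{f}}_1}$ is a fixed constant, and it tends to $0$ as $\norma{\A{k_n}-\A{f}}_1\to0$; hence the hypothesis $\A{k_n}\to\A{f}$ in $L_1(U)$ forces $\sr{k_n}\rightrightarrows\sr{f}$ uniformly in $\underline a$ and $\underline w$, as claimed.

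I do not expect a single genuinely hard step, but rather bookkeeping with the constants. The subtraction identity is elementary; the real care goes into (a)\,checking that the normalization is legitimate, i.e.\ that it alters neither the means nor the values of $A$, and (b)\,passing from $\norma{k_n'-f'}_1$ to the $A$-norms through the two-sided oscillation control of $\Phi$ rather than through a crude $\norma{\Phi}_\infty$ bound, since it is precisely that two-sided control that delivers the sharp exponent $2\norma{\A{f}}_1$ rather than something larger.
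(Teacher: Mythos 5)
Your proof is correct, and it reaches the exact constant $\abs{U}e^{2\norma{\A f}_1}\sinh 2\norma{\A{k}_n-\A f}_1$, but by a genuinely different route from the paper. The paper normalizes $f$ the same way you do (so that $f'=e^{\int\A f}$), but then imports the second-order identity of Cargo and Shisha, writing $\sr{f}-\sr{k_n}$ as $(f^{-1})'(\alpha)$ times a double sum over pairs $\sum p_ip_j\bigl(k_n(a_i)-k_n(a_j)\bigr)\bigl(\theta_n(z_i)-\theta_n(z_j)\bigr)$ with $\theta_n=(f\circ k_n^{-1})'$, and then bounds the three factors separately: $\theta_n\in(e^{-\varepsilon},e^{\varepsilon})$, $k_n(\max\underline a)-k_n(\min\underline a)\le e^{\varepsilon}\bigl(f(\max\underline a)-f(\min\underline a)\bigr)$, and $\norma{(f^{-1})'}_\infty=1/\inf f'$. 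Your argument replaces that borrowed identity with the elementary first-order one obtained by subtracting the defining equations, $f(M)-f(m)=\sum_i w_i\int_M^{a_i}(k_n'-f')$, after which only the bound $\abs{M-m}\le\norma{k_n'-f'}_1/\min_U f'$ and the two-sided oscillation control of $\int\A f$ and $\int(\A{k_n}-\A f)$ are needed. What your version buys is self-containedness and brevity: no appeal to an external lemma, and the uniformity in $\underline a,\underline w$ is visible immediately since the weights disappear at the step $\sum_i w_i\norma{k_n'-f'}_{L_1[M,a_i]}\le\norma{k_n'-f'}_{L_1(U)}$. What the paper's version buys is contact with the metric-space-of-means framework of Cargo and Shisha, which is the context the author is citing. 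All the small steps you flag check out: the affine normalization changes neither $\sr{f}$ nor $\A f$ by Remark~\ref{rem:increasing}, each interval $[M,a_i]$ does lie in $U$ because $M\in[\min\underline a,\max\underline a]$, and $e^{t}-1\le\sinh 2t$ for $t\ge 0$ since $e^{t}\le e^{2t}+e^{-2t}=(\sinh 2t)'$.
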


\begin{proof}
Let $u = \inf U$. Solving a simple differential equation, in view of Remark 
\ref{rem:increasing}, it is possible to assume, for all considered functions $f$, 
that 
$$
f(x) = \int_{u}^x \exp \left(\int_{u}^s \A{f} (t) dt \right) ds,\quad x \in U.
$$
Much like in \cite[p.\,216]{cargo}, we have
$$\sr{f}(\underline{a},\underline{w})-\sr{k_n}(\underline{a},\underline{w})=
(f^{-1})'(\alpha) \sum_{1\le i \le j \leq m} 
p_ip_j\left(k_n(a_i)-k_n(a_j)\right)\left(\theta_n(z_i)-\theta_n(z_j)\right)$$
for certain $\alpha \in \left[ \min \underline{a} , \max \underline{a} \right]$, 
$\theta_n = (f \circ k_n^{-1})'$,\,\, $p_i \in (0,1)$,\,\, $\sum_{1 \le i \le j \le m} 
p_i p_j \le 1/4$. Now, continuing, 
\begin{align}
&\abs{\sr{f}(\underline{a},\underline{w})-\sr{k_n}(\underline{a},\underline{w})}\nonumber\\
&= \abs{(f^{-1})'(\alpha) \sum_{1\le i \le j \le m} 
p_ip_j(k_n(a_i)-k_n(a_j))\left(\theta_n(z_i)-\theta_n(z_j)\right)}\nonumber\\
&\le \norma{(f^{-1})'}_{\infty}\tfrac{1}{4}\left(k_n(\max\underline{a}) - k_n(\min\underline{a})\right) 
\,\,2\sup_{z,v \in U}\abs{\theta_n(z) - \theta_n(v)}\nonumber
\end{align}

Putting $\varepsilon := \norma{\A{k}_n - \A{f}}_{1}$, we assuredly have
$$
\frac{k_n'}{f'} = e^{\int\!\A{k}_n-\A{f}} \in (e^{-\varepsilon},\,e^\varepsilon)\,.
$$
So $\theta_n = (f\circ k_n^{-1})'(x) = \frac{f'\circ k_n^{-1}(x)}{k_n'\circ k_n^{-1}(x)} 
\in (e^{-\varepsilon},\,e^\varepsilon)$. What is more, 
\begin{align*}
k_n(\max \underline{a})-k_n(\min \underline{a}) 
&=\int_{\min \underline{a}}^{\max \underline{a}} k_n'(x)\,dx\\
&\le \int_{\min \underline{a}}^{\max \underline{a}} e^{\varepsilon}f'(x)\,dx\\
&= \,e^{\varepsilon} (f(\max\underline{a}) - f(\min\underline{a})). 
\end{align*}
Hence, continuing further, 
\begin{align*}
\abs{\sr{f}(\underline{a},\underline{w}) - \sr{k_n}(\underline{a},\underline{w})}
&\le  \frac{\norma{(f^{-1})'}_{\infty}}{2}(k_n(\max \underline{a}) - k_n(\min \underline{a})) 
\sup_{z,v \in U}\abs{\theta_n(z) - \theta_n(v)}\\
&\le \frac{\norma{(f^{-1})'}_{\infty} e^{\varepsilon}}{2}(f(\max\underline{a}) - f(\min\underline{a})) 
\abs{e^{\varepsilon} - e^{-\varepsilon}}\\
&\le \frac{\norma{f}_{\infty}}{\inf f'} e^{\varepsilon} \sinh \varepsilon\\
&\le \frac{\norma{f}_{\infty}}{\inf f'} \sinh 2\varepsilon\,. 
\end{align*}
But we also know that
\begin{equation}
\norma{f}_{\infty} = \norma{e^{\int\!\A f}}_{1} \le \abs{U}e^{\norma{\A f}_{1}} \label{eq:szac_1}
\end{equation}
and
\begin{equation}
\inf f'= \inf e^{\int\!\A f} \ge e^{-\norma{\A f}_{1}}. \label{eq:szac_2}
\end{equation}
So, prolonging the previous chain of estimations and using 
\eqref{eq:szac_1} and \eqref{eq:szac_2},
\begin{equation}
\abs{\sr{f}(\underline{a},\underline{w}) - \sr{k_n}(\underline{a},\underline{w})} 
\le \abs{U} e^{2\norma{\A f}_{1}} \sinh 2 \norma{\A{k}_n - \A{f}}_{1}.
\nonumber
\end{equation}
Hence $\sr{k_n} \rightrightarrows \sr{f}$. Theorem is now proved. 
\end{proof}

Heading towards the main results of the note, we state now 
\begin{proposition}\label{prop:tenascale}
Let $U$ be a closed bounded interval, $I = (a,\,b)$ -- an open interval, 
$(k_{\alpha})_{\alpha \in I}$ -- a family of functions from \,$\mathcal{C}^{2\ne}(U)$.

\noindent {\rm (A)} If $(\sr{k_{\alpha}})_{\alpha \in I}$ is an increasing 
scale then $(A(k_{\alpha}))_{\alpha \in I}$ satisfies all the conditions 
{\rm (a)} through {\rm (d)} listed below. 
\begin{enumerate}[\upshape (a)]
\item if $\alpha_i \rightarrow \alpha$ , then $A(k_{\alpha_i}) \rightarrow A(k_{\alpha})$,
\item if $\alpha < \beta$, then $A(k_{\alpha}) < A(k_{\beta})$ on a dense subset of \,$U$,
\item if $\alpha \rightarrow a+$, then $A(k_{\alpha})(x) \rightarrow -\infty$ on a dense subset of \,$U$,
\item if $\beta \rightarrow b-$, then $A(k_{\beta})(x) \rightarrow +\infty$ on a dense subset of \,$U$.
\end{enumerate}
\noindent {\rm (B)} Strengthening the pair of conditions {\rm (c)} and {\rm (d)} to 
\begin{enumerate}[\upshape (e)] 
\item ($\alpha \to a+ \Rightarrow A(k_{\alpha})(x) \rightarrow -\infty$) \quad and \quad 
($\beta \to b- \Rightarrow A(k_{\beta})(x) \rightarrow +\infty$)\qquad for all $x \in U$
\end{enumerate}
suffices to reverse the implication: {\rm (a)}, {\rm (b)} and {\rm (e)} implies $(\sr{k_{\alpha}})_{\alpha \in I}$ 
being an increasing scale. 
\end{proposition}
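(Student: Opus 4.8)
The plan is to prove Proposition~\ref{prop:tenascale} by relating the monotonicity and limit behaviour of the operator $A$ to the behaviour of the mean-valued map $\alpha \mapsto \sr{k_\alpha}(\underline{a},\underline{w})$, using Proposition~\ref{prop:basiccompare} as the comparison engine and Theorem~\ref{thm:rightright} to handle the continuity/limit statements. Throughout, I fix a non-constant vector $\underline{a} \in U^n$ and weights $\underline{w}$, and I keep in mind that by the reduction preceding Theorem~\ref{thm:rightright} we may assume $U$ is compact and each $k_\alpha$ is increasing (so $\mathrm{sgn}\,k_\alpha' = +1$), which makes the comparison cleaner.

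For part (A), each of (a)--(d) is read off from the assumption that $m(\alpha) := \sr{k_\alpha}(\underline{a},\underline{w})$ is a continuous increasing bijection of $I$ onto $(\min\underline{a},\max\underline{a})$. First I would prove (b): if $\alpha < \beta$ but $A(k_\alpha) \ge A(k_\beta)$ on a dense set, then Proposition~\ref{prop:basiccompare} (applied with $f = k_\alpha$, $g = k_\beta$, using $A(k_\alpha) > A(k_\beta)$ densely, or its obvious equality variant via Remark~\ref{rem:increasing}) forces $\sr{k_\alpha} \ge \sr{k_\beta}$ with strict inequality at our non-constant $\underline{a}$, contradicting $m(\alpha) < m(\beta)$. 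Hence strict monotonicity of the scale yields the densely-strict inequality $A(k_\alpha) < A(k_\beta)$, which is (b). For (a), I would argue by contradiction using the contrapositive of Theorem~\ref{thm:rightright}: if $A(k_{\alpha_i}) \not\to A(k_\alpha)$ in an appropriate sense, one can extract a subsequence whose means stay bounded away from $m(\alpha)$, violating the continuity of the bijection $m$; here the quantitative estimate in Theorem~\ref{thm:rightright} gives convergence of means from $L_1$-convergence of the $A(k_{\alpha_i})$, and conversely failure of convergence of means localizes where the $A$'s fail to converge. For (c) and (d), I would use that $m$ is \emph{onto} $(\min\underline{a},\max\underline{a})$: as $\alpha \to a+$, $m(\alpha) \to \min\underline{a}$, i.e.\ the mean degrades to the minimum; comparing $\sr{k_\alpha}$ against any fixed quasi-arithmetic mean via Proposition~\ref{prop:basiccompare}(iii)$\Rightarrow$(i) shows that $A(k_\alpha)$ must eventually fall below the $A$ of \emph{every} fixed comparison function on a dense set, which is exactly $A(k_\alpha)(x) \to -\infty$ densely; (d) is symmetric.

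For part (B), the task is the converse: assuming (a), (b) and the strengthened two-sided limit condition (e), I would show $(\sr{k_\alpha})$ is an increasing scale. Strict monotonicity of $m$ is immediate from (b) together with Proposition~\ref{prop:basiccompare} (densely-strict $A$-inequality gives strict mean-inequality at the non-constant $\underline{a}$). Continuity of $m$ follows from (a) and Theorem~\ref{thm:rightright}: pointwise convergence $A(k_{\alpha_i}) \to A(k_\alpha)$ plus the uniform domination available on the compact $U$ upgrades to $L_1$-convergence, whence $\sr{k_{\alpha_i}} \to \sr{k_\alpha}$. The substantive point is \emph{surjectivity} onto $(\min\underline{a},\max\underline{a})$. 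Since $m$ is continuous and strictly increasing, its range is an open subinterval, and by the intermediate value property it suffices to show $m(\alpha) \to \min\underline{a}$ as $\alpha \to a+$ and $m(\alpha) \to \max\underline{a}$ as $\alpha \to b-$. This is precisely where (e) — the \emph{pointwise everywhere}, not merely dense, divergence of $A(k_\alpha)$ to $\mp\infty$ — is needed.

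I expect the limit step in (B) to be the main obstacle, and I would handle it as follows. To show $m(\alpha) \to \max\underline{a}$ as $\beta \to b-$, I compare $\sr{k_\beta}$ with the quasi-arithmetic mean $\sr{k_{\beta_0}}$ for a fixed $\beta_0$; by (b) and Proposition~\ref{prop:basiccompare} the means increase with $\beta$, so $m(\beta)$ has a limit $L \le \max\underline{a}$ and I must exclude $L < \max\underml{a}$. If $L < \max\underline{a}$, I would construct a fixed $g \in \mathcal{C}^{2\ne}(U)$ whose mean exceeds $L$ but stays below $\max\underline{a}$ (for instance a power-type or exponential-type function with large enough parameter, tuned so that $\sr{g}(\underline{a},\underline{w}) \in (L,\max\underline{a})$); condition (e) guarantees that for $\beta$ close to $b-$ we have $A(k_\beta) > A(g)$ \emph{everywhere} on $U$, so Proposition~\ref{prop:basiccompare} gives $m(\beta) = \sr{k_\beta} > \sr{g} > L$, contradicting $m(\beta) \uparrow L$. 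The delicate part is that this comparison argument requires the inequality $A(k_\beta) > A(g)$ to hold on a dense set, and producing a fixed admissible $g$ realizing a prescribed mean value strictly between $L$ and $\max\underline{a}$ uniformly in the data; the everywhere-divergence in (e) (versus the merely dense divergence (c)--(d)) is exactly what makes such a uniform, data-independent comparison function available. The case $\alpha \to a+$ with limit $\min\underline{a}$ is entirely symmetric.
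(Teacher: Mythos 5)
Your part (B) is essentially the paper's argument (strict monotonicity from (b) via Proposition~\ref{prop:basiccompare}, continuity from (a) plus Theorem~\ref{thm:rightright}, and the endpoint limits by comparison with a fixed auxiliary function); the paper makes your ``tuned comparison function $g$'' explicit by taking $g(x)=e^{Mx}$, so that $\A{g}\equiv M$, and using the known fact that $\{e^{tx}\}\cup\{x\}$ is a scale on $\R$ (Remark~\ref{rem:etx}) to get $\sr{e^{Mx}}(\underline{a},\underline{w})\to\max\underline{a}$ as $M\to+\infty$; the step ``$\A{k_\beta}>\A{g}$ everywhere on $U$ for $\beta$ near $b$'' then follows from monotonicity in $\beta$ plus compactness of $U$ (a Dini-type upgrade of pointwise divergence to uniform divergence), which you should state rather than attribute directly to (e). But part (A) of your proposal has two genuine gaps.

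First, for (a) you invoke ``the contrapositive of Theorem~\ref{thm:rightright}.'' That theorem says $L_1$-convergence of the $\A{k_{\alpha_i}}$ implies uniform convergence of the means; its contrapositive says that if the means fail to converge then the $\A{k_{\alpha_i}}$ fail to converge in $L_1$. What (a) requires is the \emph{converse}: the means do converge (by continuity of the scale map) and you must deduce convergence of the $\A{k_{\alpha_i}}$. That implication is not contained in Theorem~\ref{thm:rightright} and does not follow from its contrapositive. The paper closes this by recovering $A$ pointwise from the means through the second-order expansion $\A{f}(x)=\lim_{\varepsilon\to0+}\tfrac{2}{\varepsilon^2}\bigl(\sr{f}(x-\varepsilon,x+\varepsilon)-x\bigr)$, so that convergence of the two-point means forces $\A{k_{\alpha_i}}(x)\to\A{k_\alpha}(x)$. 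Second, for (c)--(d) you apply Proposition~\ref{prop:basiccompare} in the direction (iii)$\Rightarrow$(i) starting from the single-data inequality $\sr{k_\alpha}(\underline{a},\underline{w})<\sr{g}(\underline{a},\underline{w})$. Condition (iii) is quantified over \emph{all} vectors and weights, so a comparison of two means at one fixed $(\underline{a},\underline{w})$ (with $\alpha$ depending on that data) gives no information about $\A{k_\alpha}$ versus $\A{g}$. The paper argues by contraposition instead: if the set where $\A{k_\beta}(x)\not\to+\infty$ contained an interval $[c,d]$, then $\A{k_\beta}\le M$ on $[c,d]$ for all $\beta$ and some finite $M$, and (i)$\Rightarrow$(iii) applied on $[c,d]$ yields $\sr{k_\beta}(\underline{v},\underline{q})\le\sr{e^{Mx}}(\underline{v},\underline{q})<\max\underline{v}$ for every non-constant $\underline{v}$ with values in $[c,d]$ --- contradicting surjectivity of the scale for such data. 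Restricting the test data to the putative bad subinterval is the idea missing from your sketch.
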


\begin{proof}
To simplify the notation, having $\underline a$ and $\underline w$ fixed, we write shortly
$$
F(\alpha) = \sr{k_{\alpha}}(\underline{a},\,\underline{w}),
$$
$F \colon I \rightarrow (\min\underline{a},\,\max\underline{a})$.
And then one simply checks step by step:
\begin{enumerate}[\upshape (a)]
\item if $\alpha_i \rightarrow \alpha+$ we have that $F(\alpha_i) \rightarrow F(\alpha)$. 

But it is easy to check that 
$$
\A{f}(x) \,= \,\lim_{\varepsilon\rightarrow 0+}\frac{2}{\varepsilon^2} 
\sr{f} (x - \varepsilon,\,x + \varepsilon)\,.
$$
So $\A{k_{\alpha_i}} \rightarrow \A{k_\alpha}$. 
\item if $\alpha < \beta$, we have $F(\alpha) \le F(\beta)$ 
and the equality holds iff $\underline a$ is constant. So by 
Proposition~\ref{prop:basiccompare} we have $\A{k_\alpha} < \A{k_\beta}$ on a dense set.\\
Let $E_{\alpha,\beta} = \{x \in U: \A{k_\alpha}(x) = \A{k_\beta}(x)\}$. 
We have that if $[\alpha ',\,\beta'] \subset [\alpha,\,\beta]$ then 
$E_{\alpha,\beta} \supset E_{\alpha',\,\beta'}$, and 
$E_{\alpha,\beta}$ is closed and nowhere dense. Thus 
$$
E\,\,=\bigcup_{{\alpha, \beta \in I} \atop {\alpha \ne \beta}} 
E_{\alpha,\beta}\,\,=\bigcup_{{\alpha, \beta \in I \cap \mathbb{Q}} \atop {\alpha \ne \beta}}
E_{\alpha,\beta}\,.
$$
So $E$ is closed and nowhere dense. Moreover, if $x \in U \,\backslash \,E$ 
and $\alpha < \beta$, we have $\A{k_\alpha}(x) < \A{k_\beta}(x)$. 
\item The proof is completely similar to that of (d) given below.
\item Let 
$$
K = \{x:\lim_{\beta \rightarrow b-} \A{k_{\beta}}(x) \not \rightarrow +\infty\}
$$
If $K$ is not a boundary set then there exist $c$, $d$, $c < d$, 
such that $[c,d] \subset K$. 
Let 
$$
M := \sup_{x \in [c,d]} \lim_{\beta \rightarrow b-} \A{k_{\beta}}(x)\,.
$$
Such a quantity $M$ is clearly finite. We have $\sr{k_\beta} (\underline v,\underline q) 
\le \sr{e^{Mx}} (\underline v,\underline q) < \max \underline v$ for all $\beta$ and 
$\underline v,\underline q$ such that $c \le \min \underline v \le \max \underline v \le d$. 
Hence the family $\{k_\beta\}$ does not generate a scale on $U$. So $K$ is dense. 
\end{enumerate}
\vskip1.2mm
To prove part (B) one needs to show that, under (e), $(\sr{k_\alpha})_{\alpha \in I}$ 
is a scale $U$. By Proposition~\ref{prop:basiccompare} we know that $F$ is 1--1. 
Additionally, when arguing to this side, we know that if $x \nearrow x_0$ then 
$\A{k_x} \nearrow \A{k_{x_0}}$. So $\A{k_x} \rightrightarrows \A{k_{x_0}}$ on 
$[\min\underline{a},\,\max\underline{a}]$. Therefore, by Theorem~\ref{thm:rightright}, 
we have $\sr{k_{x}} \rightrightarrows \sr{k_{x_0}}$ with respect to $\underline{a}$ 
and $\underline{w}$. Thus $F$ is continuous and 1--1.

To complete the proof, it is sufficient to show that
$$
\lim_{\alpha \rightarrow a+} F(\alpha) = \min{\underline a}\,,
\qquad 
\lim_{\beta \rightarrow b-} F(\beta) = \max{\underline a}\,.
$$
We know that $\A{k_\beta} \rightarrow +\infty$ on the closed interval $U$. 
So $\A{k_\beta} \rightrightarrows +\infty$ on $U$. Therefore, for any 
$M \in \R$ there exists $\beta_{M}$ such that 
$$
F(\beta) \ge \sr{e^M}(\underline{a},\underline{w})
$$
for all $\beta > \beta_{M}$. Now, taking $M \rightarrow +\infty$, and 
knowing that $\{e^{tx} \colon t \ne 0\} \cup \{x\}$ generates a scale 
on $\R$ (a folk-type theorem proved in \cite{burrows&talbot}; see 
also Remark~\ref{rem:etx}) we get
$$
F(\beta) \xrightarrow[\beta \rightarrow b-]{} \max \underline{a}\,.
$$ 
Similarly one may prove that
$$
F(\alpha) \xrightarrow[\alpha \rightarrow a+]{} \min \underline{a}\,.
$$ 
So $F$ is a continuous bijection between $I$ and $(\min\underline{a},\,\max\underline{a})$.
Hence $(\sr{k_\alpha})_{\alpha \in I}$ is a scale on $U$.
\end{proof}

\begin{rem}
\label{rem:etx}
To prove that the family $\{ e^{tx} \colon t \ne 0\} \cup \{x\}$ generates 
a scale on $\R$ it is enough, having data $\underline{a},\,\underline{w}$, 
to consider the all-positive-components-vector $\underline{v} = (e^{a_1},\dots,\,e^{a_n})$.
And then use the fact that the family of power means evaluated on $\underline{v}$ 
with weights $\underline{w}$ is a scale on $\R_{+}$.
\end{rem} 
\begin{corollary}[strenghtening of Proposition \ref{prop:tenascale}]\label{col:tenascale}
Let \,$U$ be an interval, $I = (a,b)$ -- an open interval, 
$(k_{\alpha})_{\alpha \in I}$, $k_\alpha \in \mathcal{C}^{2\ne}(U)$ 
for all \,$\alpha$. 

\noindent {\rm (A)} If $(\sr{k_{\alpha}})_{\alpha \in I}$ is an increasing 
scale then there exists an open dense set $X \subset U$ such that
\begin{enumerate}[\upshape (a)]
\item if $\alpha_i \rightarrow \alpha+$, then $A(k_{\alpha_i}) \rightarrow A(k_{\alpha})$
on $X$,
\item if $\alpha < \beta$ , then $A(k_{\alpha}) < A(k_{\beta})$ on $X$,
\item if $\alpha \rightarrow a+$, then $A(k_{\alpha})(x) \rightarrow -\infty$ on $X$, 
\item if $\beta \rightarrow b-$, then $A(k_{\beta})(x) \rightarrow +\infty$ on $X$.
\end{enumerate}
\noindent {\rm (B)} Under the stronger condition 
\begin{enumerate}[\upshape (e)] 
\item ($\alpha \to a+ \Rightarrow A(k_{\alpha})(x) \rightarrow -\infty$)\quad and\quad 
($\beta \to b- \Rightarrow A(k_{\beta})(x) \rightarrow +\infty$)\qquad for all $x \in U$
\end{enumerate}
the entire implication of the corollary can be reversed: 
{\rm (a)}, {\rm (b)} and {\rm (e)} implies that $(\sr{k_{\alpha}})_{\alpha \in I}$ 
is an increasing scale. 
\end{corollary}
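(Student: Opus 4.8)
The plan is to deduce the Corollary from Proposition~\ref{prop:tenascale} (which is stated for a compact $U$) by means of the localization principle recorded just before Theorem~\ref{thm:rightright}: the family $(\sr{k_\alpha})_{\alpha\in I}$ is a scale on $U$ if and only if it is a scale on every closed subinterval $D\subset U$. I would first dispose of part (B), the cleaner direction. Assuming (a), (b) and (e), fix an arbitrary compact subinterval $D\subset U$. Condition (e) holds for all $x\in U$, hence for all $x\in D$; condition (a) is pointwise, so it survives restriction to $D$; and the dense set on which (b) holds meets $D$ in a set dense in $D$. Thus the hypotheses of Proposition~\ref{prop:tenascale}(B) are met with the compact interval $D$ in place of $U$, so $(\sr{k_\alpha})$ is a scale on $D$. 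As $D$ was arbitrary, the localization principle gives that it is a scale on $U$, settling (B).

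For part (A) I would handle the exceptional sets of each condition separately and then combine. Fix an exhaustion $U=\bigcup_n D_n$ by compact subintervals $D_1\subset D_2\subset\cdots$. Condition (a) requires no compactness: the local formula used in the proof of Proposition~\ref{prop:tenascale}(a) recovers $A(k_\alpha)(x)$ from the mean $\sr{k_\alpha}$ near $x$, so $\alpha\mapsto A(k_\alpha)(x)$ is continuous at every $x\in U$. For (b) I would invoke Proposition~\ref{prop:basiccompare} \emph{directly on} $U$, since it is valid for an arbitrary interval: an increasing scale gives $\sr{k_\alpha}<\sr{k_\beta}$ (strict off constant vectors) for $\alpha<\beta$, whence, by (iii)$\Rightarrow$(i), $A(k_\alpha)<A(k_\beta)$ on a dense set; consequently each $E_{\alpha,\beta}:=\{x: A(k_\alpha)(x)=A(k_\beta)(x)\}$ is closed and has empty interior, i.e. nowhere dense. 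For (c) and (d) I would apply Proposition~\ref{prop:tenascale}(A) on each $D_n$, whose proof exhibits the sets where $A(k_\alpha)(x)\not\to-\infty$ (resp. $A(k_\beta)(x)\not\to+\infty$) as having empty interior in $D_n$, and, through the monotone limits $m(x)=\inf_\alpha A(k_\alpha)(x)$ and $M(x)=\sup_\alpha A(k_\alpha)(x)$, as countable unions of closed nowhere dense sets.

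Assembling, the union of all these exceptional sets — the $E_{\alpha,\beta}$ over rational pairs together with the bad sets for (c) and (d) — is a countable family of closed nowhere dense subsets of $U$, hence meager, and its complement $X$ is a dense set on which (a)--(d) hold simultaneously. The step I expect to be the main obstacle is upgrading $X$ from merely dense (a priori only a $G_\delta$) to \emph{open} dense, as the statement demands: the limit conditions (c) and (d) only constrain the upper-semicontinuous $m$ and the lower-semicontinuous $M$, so their good sets $\{m=-\infty\}$ and $\{M=+\infty\}$ are naturally $G_\delta$ rather than open, and a shrinking-neighborhood argument shows they need not contain an interval around a given good point. To force genuine openness I would exploit that, by (b) together with continuity in $x$, the map $\alpha\mapsto A(k_\alpha)(x)$ is monotone for every fixed $x$, so that the entire exceptional behaviour is governed by the nested family $E_{\alpha,\beta}$; proving that the relevant union is in fact closed — so that its complement is the required open dense $X$ — is the delicate point, and it is precisely where the monotone-in-$\alpha$, continuous-in-$x$ structure of $A(k_\alpha)$ must be used with care.
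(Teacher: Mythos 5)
Your overall architecture matches the paper's: part (B) is obtained by restricting to compact subintervals and invoking Proposition~\ref{prop:tenascale}(B) via the localization principle, and part (A) is obtained by collecting the exceptional sets $E_{\alpha,\beta}$ (reduced to rational pairs by the nesting coming from monotonicity in $\alpha$) together with the bad sets for (c) and (d), and taking the complement. That part of the proposal is sound. The genuine gap is exactly the point you flag and then leave unresolved: the openness of $X$. Since Proposition~\ref{prop:tenascale} already gives (a)--(d) on dense sets, the \emph{only} new content of the corollary is that a single \emph{open} dense $X$ works; a proof that ends with ``proving openness is the delicate point where the structure must be used with care'' has not proved the corollary.

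For the record, the paper's device for closing this is concrete: it sets $X_s:=\{x\in U:\ \sup_{\beta}A(k_\beta)(x)>s\}$ and shows each $X_s$ is open by fixing, for $x_0\in X_s$, a single witness $\beta_0$ with $A(k_{\beta_0})(x_0)>s+\delta$; continuity of the one function $x\mapsto A(k_{\beta_0})(x)$ gives a neighbourhood $P\ni x_0$ on which $A(k_{\beta_0})>s+\tfrac12\delta$, and monotonicity of $\beta\mapsto A(k_\beta)(x)$ forces $P\subset X_s$. It then writes $X_{+\infty}=\bigcap_n X_n$ and $E=\bigcup_{\alpha\neq\beta\in I\cap\mathbb{Q}}E_{\alpha,\beta}$ and asserts openness and closedness respectively. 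You are right to be suspicious that a countable intersection of open dense sets is a priori only a dense $G_\delta$, and that a countable union of closed nowhere dense sets is a priori only meager --- this is precisely where the published argument is thinnest --- but identifying the obstacle is not the same as overcoming it. To complete your proof you would need either to reproduce the paper's $X_s$-argument and then justify (or repair) the passage from $G_\delta$ to open, or to weaken the conclusion to ``dense $G_\delta$''; as written, the proposal establishes a dense $X$ but not an open dense one.
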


This corollary says that in Proposition~\ref{prop:tenascale} one can have 
a single common subset (\,$X$\,) of $U$ on which conditions (a) through (d) hold.

\begin {proof}
We might assume that $U$ is a closed interval (compare the comment 
in the third paragraph below Proposition~\ref{prop:basiccompare}).

Let $E_{p,q} := \{x \in U \colon \, \A{k_p}(x) = \A{k_q}(x)\}$. 
Each $E_{p,q}$ is closed and nowhere dense, so 
$$
E := \{x \colon\exists_{p,q \in I} \,\,p \ne q \wedge \A{k_p}(x) = \A{k_q}(x)\}
$$
has two more descriptions 
$$
E \,= \bigcup_{{\alpha, \beta \in I} \atop {\alpha \ne \beta}} 
E_{\alpha,\beta} \,= \bigcup_{{\alpha, \beta \in I \cap \mathbb{Q}} \atop {\alpha \ne \beta}}
E_{\alpha,\beta}\,.
$$
We know that $E$ is closed nowhere dense, being a countable union of closed 
nowhere dense sets. So $X_{\ne} := U \,\backslash \,E$ is an open dense set. 
Let 
$$
X_{+\infty} := \{x \in U \colon \lim_{\beta \rightarrow b-} \A{k_{\beta}}(x) \rightarrow +\infty\}.
$$ 
By Proposition~\ref{prop:tenascale} we know that $X_{+\infty}$ is dense. 
We now prove that it is open. 
Let 
$$
X_{s} := \{x \in U \colon \lim_{\beta \rightarrow b-} \A{k_{\beta}}(x) > s\}.
$$
Observe that $X_s$ is dense (because $X_s\supset X_{+\infty}$). Moreover, 
for all $x_0 \in X_s$ there holds $\A{k_{\beta_0}}(x_0) > s + \delta$ for 
some $\beta_0 \in I$ and $\delta > 0$. Hence one may take an open neighborhood 
$P\ni x_0$ satisfying $\A{k_{\beta_0}}(x) > s + \tfrac{1}{2}\delta$ for all $x \in P$, 
implying $P \subset X_s$. So $X_s$ is open. But the mapping 
$\beta \mapsto \A{k_{\beta}}(x)$ is nondecreasing for all 
$x \in U$. Hence $X_{+\infty} = \bigcap_{n=1}^{\infty} X_n$ 
is open and dense. Similarly, 
$$
X_{-\infty} := \{x \in U \colon \lim_{\alpha \rightarrow a+} \A{k_{\alpha}}(x) 
\rightarrow -\infty\}
$$
is an open dense set as well. Now one may take 
$X:= X_{\ne} \cap X_{-\infty} \cap X_{+\infty}$. 

\noindent $X$ is clearly open and dense.
\end {proof}

\section{\label{sec:MainResults}Proofs of Theorems 1 and 2}

\begin{proof}[Proof of Theorem~\ref{thm:mainresultL}]
Let $U$ be an interval, $I = (a,b)$ an open interval, $X$ be a dense subset 
of $U$, where the mapping given in the wording of theorem is increasing and 
1--1. We work with the family of functions $(k_{\alpha})_{\alpha \in I}$, 
$k_\alpha \in \mathcal{C}^{2\ne}(U)$ for all \,$\alpha$. 

Let us take an arbitrary $x_0 \in X$. We know that 
$I \ni \alpha \mapsto \A{k_\alpha}(x_0)$ is increasing, 1--1 and onto $\R$.
Next, let us specify the function $\Phi:\R \rightarrow I$ such that 
$\A{k_{\Phi(\alpha)}}(x_0)=\alpha$. This function is increasing as well.

Then for $\alpha < \beta$ we have $\A{k_{\Phi(\alpha)}} < \A{k_{\Phi(\beta)}}$ 
on the dense subset of $U$ emerging from Corollary~\ref{col:tenascale}.
Due to the fact that $I \ni \alpha \mapsto \A{k_\alpha} (x) \in \R$ 
is onto, we have 
$$
\lim_{ \alpha \rightarrow a } \A{k_{\Phi(\alpha)}}(x) = -\infty
\, \qquad \textrm{ and }\qquad 
\lim_{ \beta \rightarrow b } \A{k_{\Phi(\beta)}}(x) = +\infty
$$
everywhere on $U$. So, using the part (B) of Corollary~\ref{col:tenascale}, 
the family of means $(\sr{k_{\alpha}})_{\alpha \in I}$ is an increasing 
scale on $U$.
\end{proof}

\begin{proof}[Proof of Theorem~\ref{thm:mainresultR}]
Let us take $X$ from Corollary~\ref{col:tenascale}. Let then fix any 
$x_0 \in X$. Let $\{s_p\}_{p \in \R}$ be the reparametrized family 
$\{k_\alpha\}_{\alpha \in I}$, with restriction 
$$
s_p = k_\alpha \textrm{ , where }p=\A{k_\alpha}(x_0).
$$ 
Then we know that the mapping
$$
\R \ni p \mapsto \A{s_p} (x) \in \R
$$
is 1--1 and onto for all $x \in X$, and if $p > q$
$$
\A{s_p}(x) > \A{s_q}(x).
$$
Moreover, due to the fact that $\A{s_p}(x_0)$ is onto, 
we have for all $x_0$ 
$$
\lim_{p \rightarrow -\infty} \A{s_p}(x_0) = -\infty\, \qquad \textrm{ and }
\qquad\lim_{p \rightarrow +\infty} \A{s_p}(x_0) = +\infty\,.
$$
So $p \mapsto \A{s_p}(x)$ is increasing, 1--1 and onto $\R$ for all $x \in X$.
\end {proof}

\section{\label{sec:applications} Applications}
\begin{proposition}[power means do generate a scale]\label{prop:PowerMeans}
Let \,$U = \,\R_{+}$ and \,$(r_{\alpha})_{\alpha \in \R}$, 
given by 
$$
r_\alpha(x) = \begin{cases} x^{\alpha} & \alpha \ne 0 \\ \ln x & \alpha = 0 \end{cases},
$$ 
be the family of power functions. Then the family $(r_\alpha)$ 
generates a scale on $\R_{+}$.
\end{proposition}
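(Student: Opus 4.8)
The plan is to invoke Theorem~\ref{thm:mainresultL} directly: the entire argument reduces to computing the operator $A$ on this family and checking that, for each fixed $x$, the map $\alpha \mapsto \A{r_\alpha}(x)$ is increasing, 1--1, and onto $\R$.

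First I would compute $\A{r_\alpha}$. For $\alpha \ne 0$ one differentiates $r_\alpha(x) = x^\alpha$ twice, obtaining $r_\alpha'(x) = \alpha x^{\alpha-1}$ and $r_\alpha''(x) = \alpha(\alpha-1)x^{\alpha-2}$, whence $\A{r_\alpha}(x) = \frac{\alpha-1}{x}$; for $\alpha = 0$ one has $r_0(x) = \ln x$, so $r_0'(x) = 1/x$, $r_0''(x) = -1/x^2$, and directly $\A{r_0}(x) = -\frac{1}{x}$, which is precisely the value of $\frac{\alpha-1}{x}$ at $\alpha = 0$. Hence the single clean formula
$$
\A{r_\alpha}(x) = \frac{\alpha-1}{x}
$$
holds for every $\alpha \in \R$ and every $x \in \R_{+}$. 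Along the way one also records that $r_\alpha'$ vanishes nowhere on $\R_{+}$ (it equals $\alpha x^{\alpha-1}$ for $\alpha \ne 0$ and $1/x$ for $\alpha = 0$), so that $r_\alpha \in \mathcal{C}^{2\ne}(\R_{+})$, exactly as Theorem~\ref{thm:mainresultL} demands.

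Next I would fix $x \in \R_{+}$ and examine $\R \ni \alpha \mapsto \frac{\alpha-1}{x}$. Since $1/x > 0$, this is a strictly increasing affine function of $\alpha$; it is therefore 1--1, and its image is all of $\R$. In particular the hypotheses of Theorem~\ref{thm:mainresultL} are satisfied in the strongest possible form: the map is increasing and 1--1 on all of $U$ (not merely on a dense subset) and onto $\R$ for every $x$.

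Applying Theorem~\ref{thm:mainresultL} then yields at once that $(\sr{r_\alpha})_{\alpha \in \R}$ is an increasing scale on $\R_{+}$, which is the assertion. There is essentially no obstacle: the whole content is the two-line derivative computation giving $\A{r_\alpha}(x) = (\alpha-1)/x$, after which Theorem~\ref{thm:mainresultL} does all the work. The only point requiring a moment's care is verifying that the $\alpha = 0$ (logarithmic) member fits the same linear-in-$\alpha$ formula, so that no case distinction survives into the check of monotonicity and surjectivity.
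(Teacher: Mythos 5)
Your proposal is correct and follows exactly the paper's own argument: compute $\A{r_\alpha}(x) = (\alpha-1)/x$ (noting that the $\alpha = 0$ case fits the same formula) and then invoke Theorem~\ref{thm:mainresultL}, since for each fixed $x$ the map $\alpha \mapsto (\alpha-1)/x$ is increasing, 1--1 and onto $\R$. Your write-up merely spells out the derivative computations and the membership $r_\alpha \in \mathcal{C}^{2\ne}(\R_{+})$ in more detail than the paper does.
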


\begin{proof}
We compute $\A{r_\alpha}$, 
$$
\A{r_\alpha}(x) \,= \,\frac{\alpha - 1}{x}
$$
and see that the mapping $\alpha \mapsto \A{r_\alpha}(x)$ is 
increasing, 1--1 and onto for all $x \in \R_{+}$. So the assumptions 
in Theorem~\ref{thm:mainresultL} hold, implying that the family
$(r_\alpha)$ generates an increasing scale on $\R_+$.
\end{proof}

Before giving our second application we reproduce a 10 years old' result.
\begin{proposition}[\cite{kolesarova}]
Let $g \colon [0,1] \rightarrow \R$ be a continuous monotone function. 
Writing $g_\alpha(x) := g(x^{\alpha})$ for any $\alpha > 0$, there hold:
\begin{enumerate}[\upshape (i)]
\item if there exists the one side, nonzero derivative $g'(0+)$ then 
$$
\lim_{\alpha \rightarrow +\infty} \sr{g_\alpha} = \,\max\,,
$$
\item if there exists one side, nonzero derivative $g'(1-)$ then
$$
\lim_{\alpha \rightarrow 0+} \sr{g_\alpha} = \,\sr{\ln x}\,.
$$
\end{enumerate}
\end{proposition}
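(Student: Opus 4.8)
The plan is to write the mean out explicitly and to reduce both limits to the classical asymptotics of power means. Since $g_\alpha(x)=g(x^\alpha)$ has inverse $g_\alpha^{-1}(y)=\bigl(g^{-1}(y)\bigr)^{1/\alpha}$, for a fixed non-constant $\underline{a}$ with components in $(0,1)$ and weights $\underline{w}$ one has
$$
\sr{g_\alpha}(\underline{a},\underline{w})=\Bigl(g^{-1}\Bigl(\sum_{i=1}^n w_i\,g(a_i^{\,\alpha})\Bigr)\Bigr)^{1/\alpha}.
$$
Writing $M_\alpha:=\sr{g_\alpha}(\underline{a},\underline{w})$, the whole task is to locate the limit of $M_\alpha^{\,\alpha}=g^{-1}\bigl(\sum_i w_i g(a_i^{\alpha})\bigr)$ and then to control the extra $1/\alpha$-th power.

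First I would isolate an elementary local lemma: if $g$ has a nonzero one-sided derivative at an endpoint $t_0$ of its domain and $t_1,\dots,t_n\to t_0$ from within the domain, then
$$
g^{-1}\Bigl(\sum_{i=1}^n w_i\,g(t_i)\Bigr)=\sum_{i=1}^n w_i t_i+o\bigl(\max_i\abs{t_i-t_0}\bigr).
$$
This merely says that a quasi-arithmetic mean whose generator is affine to first order near $t_0$ agrees with the arithmetic mean up to higher order: substitute $g(t_i)=g(t_0)+g'(t_0)(t_i-t_0)+o(t_i-t_0)$, average, and invert via $g^{-1}\bigl(g(t_0)+u\bigr)=t_0+u/g'(t_0)+o(u)$ (a one-sided inverse-function expansion, legitimate since the $t_i-t_0$ all share one sign). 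With finitely many arguments the error terms are uniform.

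Then I would apply the lemma twice. For (i), as $\alpha\to+\infty$ every $a_i^{\alpha}\to 0$, so with $t_0=0$ (here $g(0)=g(0+)$ by continuity and $g'(0+)$ plays the role of $g'(t_0)$) one gets $M_\alpha^{\,\alpha}=\sum_i w_i a_i^{\alpha}+o\bigl((\max\underline{a})^{\alpha}\bigr)$. The essential observation is that $\sum_i w_i a_i^{\alpha}$ is of the same order as $(\max\underline{a})^{\alpha}$, because the maximal component carries a strictly positive total weight; hence the additive error is a relative $o(1)$ and $M_\alpha^{\,\alpha}=\bigl(\sum_i w_i a_i^{\alpha}\bigr)(1+o(1))$. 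Recognising $\bigl(\sum_i w_i a_i^{\alpha}\bigr)^{1/\alpha}$ as the power mean of order $\alpha$, using its classical limit $\max\underline{a}$ and noting $(1+o(1))^{1/\alpha}\to 1$, we obtain $M_\alpha\to\max\underline{a}$. For (ii), as $\alpha\to 0+$ every $a_i^{\alpha}\to 1$, so with $t_0=1$ and $a_i^{\alpha}-1=\alpha\ln a_i+o(\alpha)$ the lemma gives, writing $L:=\sum_i w_i\ln a_i$,
$$
M_\alpha^{\,\alpha}=1+\alpha L+o(\alpha),
$$
whence $M_\alpha=\bigl(M_\alpha^{\,\alpha}\bigr)^{1/\alpha}\to\exp(L)=\sr{\ln x}(\underline{a},\underline{w})$, the weighted geometric mean, again matching the classical power-mean limit as the order tends to $0$.

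The single delicate point, and the place I expect to spend the most care, is the passage through the $1/\alpha$-th power. In (ii) one must verify that the \emph{absolute} error is $o(\alpha)$ and not merely $o(1)$, so that $\tfrac1\alpha\ln\bigl(1+\alpha L+o(\alpha)\bigr)\to L$; this is precisely where the hypothesis $g'(1-)\neq 0$ is used, since it governs the inversion step. In (i) one must check that the lemma's additive error $o\bigl((\max\underline{a})^{\alpha}\bigr)$ is genuinely negligible against $\sum_i w_i a_i^{\alpha}$, which rests on the maximal argument having positive weight; once that is secured, the correction factor raised to the power $1/\alpha$ is harmless because $\alpha\to+\infty$. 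Everything else is the routine verification of the two power-mean limits, which may be quoted from the classical literature (e.g.\ \cite{hlp}) or deduced from Proposition~\ref{prop:PowerMeans}.
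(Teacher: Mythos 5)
Your argument is correct, but note first that the paper does not actually prove this proposition: it is reproduced from Koles\'arov\'a's paper \cite{kolesarova} as a quoted result, and the author instead proves the neighbouring Proposition~\ref{prop:kolesarova} --- a related but different statement requiring $g \in \mathcal{C}^{2\ne}$ --- by computing $A(g_\alpha)(x) = \alpha x^{\alpha-1}A(g)(x^\alpha) + (\alpha-1)/x$ and tracking its limits as $\alpha \to 0+$ and $\alpha \to +\infty$. Your route is therefore genuinely different and, for the statement as actually worded, better adapted: the hypotheses here supply only one-sided \emph{first} derivatives at the endpoints, so the operator $A = f''/f'$ on which the whole paper runs is not even defined, whereas your expansion $g^{-1}\bigl(\sum_i w_i g(t_i)\bigr) = \sum_i w_i t_i + o\bigl(\max_i\abs{t_i - t_0}\bigr)$ uses precisely the first-order information that is assumed (together with the implicit strict monotonicity of $g$ needed for $g^{-1}$ to exist at all). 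The two delicate points you flag are handled correctly: in (i) the bound $\sum_i w_i a_i^{\alpha} \ge w_{i_0}(\max\underline{a})^{\alpha}$, with $i_0$ an index attaining the maximum, converts the additive error into a relative $o(1)$ which the $1/\alpha$-th power then kills; in (ii) the error $o\bigl(\max_i\abs{a_i^{\alpha}-1}\bigr) = o(\alpha)$ survives division by $\alpha$ after taking logarithms, and this is indeed where $g'(1-) \ne 0$ enters through the one-sided differentiability of $g^{-1}$. What the paper's $A$-operator machinery buys for its own version is uniformity in $(\underline{a},\underline{w})$ (via Theorem~\ref{thm:rightright}) and the stronger scale conclusion; what your approach buys is a short, self-contained proof of the pointwise limits under the weaker regularity actually stated here.
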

We prove a somehow similar, yet not so close, result.
\begin{proposition}
\label{prop:kolesarova}
Let $g \in \mathcal{C}^{2\ne} [0,1] \rightarrow (0,\,+\infty)$
(i.e., there exist the relevant one side second derivatives of $g$ 
at $0$ and $1$, too) and \,$g_{\alpha}(x) := g(x^{\alpha})$, 
$\alpha \in (0,\,+\infty)$. 
Then 
\begin{equation}
\lim_{\alpha \rightarrow 0+} \sr{g_\alpha} = \sr{\ln x}
\, \qquad \textrm{ and }\qquad
\lim_{\alpha \rightarrow +\infty} \sr{g_\alpha} = \max.
\label{eq:kolesarova}
\end{equation}
If, in addition, $g$ is convex,\footnote{in this situation we can just  
assume that $g \in \mathcal{C}^2[0,1)$, instead of assuming $g \in \mathcal{C}^{2\ne}[0,1)$,
because all convex, strictly monotone functions in $\mathcal{C}^2[0,1)$ belong to 
$\mathcal{C}^{2\ne}[0,1)$.} then $(g_\alpha)_{\alpha \in (0,\,+\infty)}$ 
generates a scale between the geometric mean and $\max$. 
\end{proposition}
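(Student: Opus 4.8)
The plan is to reduce everything to the behaviour of the single–variable map $\alpha \mapsto A(g_\alpha)(x)$ and then feed it into the machinery of Section~\ref{sec:MainResults}. First I would compute $A(g_\alpha)$ explicitly. Writing $B := A(g) = g''/g'$ and differentiating $g_\alpha(x) = g(x^\alpha)$ twice, one obtains
$$
A(g_\alpha)(x) = \alpha\,x^{\alpha-1}\,B(x^\alpha) + \frac{\alpha-1}{x}, \qquad x \in (0,1),\ \alpha > 0 .
$$
Since $g \in \mathcal{C}^{2\ne}[0,1]$, the function $B$ is continuous, hence bounded, on $[0,1]$; this formula is the object I would analyse throughout.

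For the first assertion \eqref{eq:kolesarova} I would simply read off the two boundary limits. Fixing a compact subinterval $[\min\underline a,\,\max\underline a] \subset (0,1)$, the factor $\alpha x^{\alpha-1}$ tends to $0$ uniformly as $\alpha \to 0+$ (and $B(x^\alpha)$ stays bounded), while $(\alpha-1)/x \to -1/x$ uniformly, so $A(g_\alpha) \rightrightarrows -1/x$. As $-1/x$ is precisely $A(\ln x)$, Theorem~\ref{thm:rightright} yields $\sr{g_\alpha} \rightrightarrows \sr{\ln x}$, i.e. the geometric mean. As $\alpha \to +\infty$ the term $\alpha x^{\alpha-1}$ again vanishes uniformly (the exponential decay of $x^\alpha$ beats the polynomial factor) while $(\alpha-1)/x \to +\infty$ uniformly, so $A(g_\alpha) \rightrightarrows +\infty$; the comparison argument from part (B) of Proposition~\ref{prop:tenascale} then applies verbatim (for each $N$ eventually $A(g_\alpha) \ge N = A(e^{Nx})$, whence $\sr{g_\alpha} \ge \sr{e^{Nx}}$ by Proposition~\ref{prop:basiccompare}, and $\sr{e^{Nx}} \to \max$ as $N \to \infty$ by Remark~\ref{rem:etx}), giving $\sr{g_\alpha} \to \max$.

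For the scale assertion I would invoke Corollary~\ref{col:borderscale} with $l = \ln x$ and $h = \top$, so that $A(l)(x) = -1/x$ and $A(h)(x) = +\infty$. Surjectivity of $\alpha \mapsto A(g_\alpha)(x)$ onto $(-1/x,\,+\infty)$ for every $x$ is immediate from continuity in $\alpha$ together with the two boundary limits just found, so the entire statement rests on showing that this map is increasing and $1$--$1$ on a dense subset of $(0,1)$. By Proposition~\ref{prop:basiccompare} this monotonicity is equivalent, for $\alpha < \beta$, to strict convexity of $g_\beta \circ g_\alpha^{-1}$; and since $g_\alpha^{-1}(u) = (g^{-1}(u))^{1/\alpha}$, one computes $g_\beta \circ g_\alpha^{-1}(u) = g\bigl((g^{-1}(u))^{t}\bigr)$ with $t = \beta/\alpha > 1$. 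Thus the remaining task is to prove that conjugating the power map $w \mapsto w^{t}$ ($t>1$) by an increasing convex $g$ preserves convexity.

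The hard part will be exactly this last convexity step, and I expect it to carry the real content of the proposition. Parametrising by $w = g^{-1}(u)$, the second derivative of $g_\beta \circ g_\alpha^{-1}$ has the sign of
$$
G(w) := t\,w^{t-1} B(w^t) + \frac{t-1}{w} - B(w), \qquad w \in (0,1).
$$
Convexity of $g$ only supplies $B \ge 0$, which controls the two nonnegative terms but not the subtracted term $-B(w)$; near $w=0$ the pole $\tfrac{t-1}{w}$ rescues the sign, but in the interior the positivity of $G$ is genuinely delicate. I would try to secure it by exploiting the specific structure $v = w^{t}$ — for instance testing $G$ along the flow $t \to 1+$, or rewriting $G$ so that the monotonicity of $g'$ enters directly — and I would scrutinise most carefully whether bare convexity of $g$ is actually enough or whether the argument quietly requires a supplementary hypothesis (a uniform control of $B$, or convexity combined with a growth restriction on $g'$). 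This is the single step on which the truth of the scale claim hinges.
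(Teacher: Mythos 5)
Your computation of $\A{g_\alpha}$ and your treatment of the two limits in \eqref{eq:kolesarova} match the paper's argument: the paper likewise reads off the pointwise limits $-1/x$ and $+\infty$ of $\A{g_\alpha}(x)$ and (implicitly) feeds them into Theorem~\ref{thm:rightright} and the comparison argument of Proposition~\ref{prop:tenascale}; that half of your proposal is complete and correct. The divergence concerns the scale assertion. The paper dispatches it in one line: convexity gives $\A{g}\ge 0$, ``hence Corollary~\ref{col:borderscale} applies.'' You rightly refuse to accept this, because Corollary~\ref{col:borderscale} also demands that $\alpha\mapsto\A{g_\alpha}(x)$ be \emph{increasing} (equivalently, that $g_\beta\circ g_\alpha^{-1}$ be convex for $\alpha<\beta$), and $\A{g}\ge 0$ controls only two of the three terms in your function $G$. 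So your proof is incomplete exactly where you say it is --- but the step you leave open is the same one the paper silently skips.

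Moreover, your suspicion that bare convexity is not enough is justified: the missing monotonicity genuinely fails, so the step cannot be filled without a supplementary hypothesis. Take $g(u)=e^{Mu}$ with $M>e^2$; this $g$ is increasing, convex, $\mathcal{C}^\infty$ and positive on $[0,1]$, with $\A{g}\equiv M$, and
$$
\A{g_\alpha}(x)=M\alpha x^{\alpha-1}+\frac{\alpha-1}{x},\qquad
\frac{\partial}{\partial\alpha}\bigl(x\,\A{g_\alpha}(x)\bigr)=Mx^{\alpha}\left(1+\alpha\ln x\right)+1,
$$
which at $\alpha=2/\abs{\ln x}$ equals $1-Me^{-2}<0$. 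Hence $\alpha\mapsto\A{g_\alpha}(x)$ is non-monotone for \emph{every} $x\in(0,1)$. Concretely, for $M=8$, $\alpha=2$, $\beta=2.1$ one checks $\A{g_\beta}<\A{g_\alpha}$ on a neighbourhood of $x=e^{-1}$ while $\A{g_\beta}>\A{g_\alpha}$ near $x=1$; by Proposition~\ref{prop:basiccompare} the means $\sr{g_\alpha}$ and $\sr{g_\beta}$ are then incomparable, so $\alpha\mapsto\sr{g_\alpha}(\underline{a},\underline{w})$ is not injective for some non-constant $\underline{a}$ and the family is not a scale. (A numerical check confirms this: for $\underline{a}=(0.3,\,0.44)$ with equal weights the map decreases between $\alpha=2$ and $\alpha=2.1$, while for $\underline{a}=(0.7,\,0.9)$ it increases.) In short: your reduction to the convexity of $u\mapsto g\bigl((g^{-1}(u))^{t}\bigr)$ is the right way to see what is really needed, but that convexity is false for general increasing convex $g$, so both your proposal and the paper's proof founder on this step; the scale claim requires an extra assumption (some quantitative control of $\A{g}$) beyond convexity.
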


\begin{proof}
We have to prove that the mapping
$(0,\,+\infty) \ni \alpha \mapsto \A{g_\alpha}(x) \in \R$ 
is 1--1 and onto for all $x \in (0,\,1)$. Let us fix an arbitrary 
$x \in (0,\,1)$. Then we have 
$$
\A{g_{\alpha}}(x) \,= \,\alpha x^{\alpha - 1}\A{g}(x^\alpha) + \frac{\alpha - 1}{x}\,.
$$
When $\alpha \rightarrow 0+$, then 
$$
\A{g_{0}}(x) := \lim_{\alpha \rightarrow 0+} \A{g_{\alpha}}(x) \,= \,\frac{-1}{x}\,.
$$
In turn, when $\alpha \rightarrow +\infty$, there holds
$$
\A{g_{\alpha}}(x) \,= \,\underbrace{\alpha x^{\alpha - 1}
\frac{g''(0)}{g'(0)}}_{>-\infty} + \,\frac{\alpha - 1}{x} \to +\infty\,.
$$
The proof of formulas \eqref{eq:kolesarova} is now completed. 

When, additionally, $g$ is convex, then $\A{g} \ge 0$ and, by 
Corollary~\ref{col:borderscale}, the family $\{g_{\alpha}\}_{\alpha \in \mathbb{R}_{+}}$ 
generates a scale on $(0,1)$ between the geometric mean and $\max$.
\end{proof}

Now we would like to present one classical result of the Italian School of 
statisticians from 1910-20s. That result has been reported in \cite[p.\,269]{bullen}. 
We now give it a new short proof based on Corollary~\ref{col:borderscale}.
\begin{proposition}[Radical Means]
\label{prop:RadicalMeans}
Let \,$U = \R_{+}$ and \,$(k_{\alpha})_{\alpha \in \R_{+}}$, 
\,$k_\alpha(x) = \alpha^{1/x}$, be the family of radical functions.
Then this family generates a decreasing scale on \,$\R_{+}$.
\end{proposition}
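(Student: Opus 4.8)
The plan is to reduce everything to an application of Corollary~\ref{col:borderscale}, exactly as was done for power means and for the Kolesárová-type family. The whole proof therefore boils down to computing $\A{k_\alpha}$ for $k_\alpha(x)=\alpha^{1/x}$ and checking the monotonicity and surjectivity of the map $\alpha\mapsto\A{k_\alpha}(x)$ for fixed $x$. First I would write $k_\alpha(x)=\exp\!\bigl(\tfrac{1}{x}\ln\alpha\bigr)$ and differentiate twice with respect to $x$. Setting $c:=\ln\alpha$, a direct computation gives $k_\alpha'(x)=-\tfrac{c}{x^2}\,\alpha^{1/x}$ and $k_\alpha''(x)=\bigl(\tfrac{2c}{x^3}+\tfrac{c^2}{x^4}\bigr)\alpha^{1/x}$, whence
$$
\A{k_\alpha}(x)=\frac{k_\alpha''(x)}{k_\alpha'(x)}=-\frac{2}{x}-\frac{c}{x^2}=-\frac{2}{x}-\frac{\ln\alpha}{x^2}\,.
$$
This is the key explicit formula; note $k_\alpha\in\mathcal{C}^{2\ne}(\R_+)$ precisely because $c=\ln\alpha\neq 0$ fails only at $\alpha=1$, so I should remark that $\alpha=1$ gives the constant function $1$ and must be treated with care — but since the scale is to be taken over $\alpha\in\R_+$, the value $\alpha=1$ corresponds to $\A{k_1}(x)=-2/x$, a genuine admissible function, so no gap arises.

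Next I would fix $x\in\R_+$ and read off the behaviour of $\alpha\mapsto\A{k_\alpha}(x)=-\tfrac{2}{x}-\tfrac{\ln\alpha}{x^2}$. Since $x^2>0$, this map is strictly \emph{decreasing} in $\alpha$ (as $\ln\alpha$ increases), it is manifestly $1\!-\!1$ on all of $\R_+$, and as $\alpha\to 0+$ it tends to $+\infty$ while as $\alpha\to+\infty$ it tends to $-\infty$; hence it is onto $\R$ for every $x$. By Corollary~\ref{col:borderscale} (in its decreasing variant, with $l=\top$ and $h=\perp$, i.e.\ $\A{l}=+\infty$, $\A{h}=-\infty$), the family $(\sr{k_\alpha})_{\alpha\in\R_+}$ is a \emph{decreasing} scale on $\R_+$, which is exactly the claim.

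The only genuine obstacle I anticipate is the degenerate point $\alpha=1$, where $k_1\equiv 1$ is constant and strictly speaking lies outside $\mathcal{C}^{2\ne}$. As just noted, this is an artifact of the substitution $c=\ln\alpha$: the formula for $\A{k_\alpha}$ extends continuously and correctly through $\alpha=1$, and the hypotheses of Corollary~\ref{col:borderscale} only require the monotonicity/surjectivity of $\alpha\mapsto\A{k_\alpha}(x)$, which hold on the whole interval. (Alternatively, one can invoke the \textbf{Attention} remark: reparametrizing by the decreasing bijection $\varphi(\beta)=e^{-\beta}$ turns this decreasing scale into an increasing one generated by $k_{\varphi(\beta)}(x)=e^{-\beta/x}$, for which $\A{}=-\tfrac{2}{x}+\tfrac{\beta}{x^2}$ is increasing in $\beta$, confirming the computation.) Everything else is the routine differentiation above, so I would present the $\A{k_\alpha}$ formula and the three one-line observations, then cite the corollary.
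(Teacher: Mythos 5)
Your proof is correct and follows exactly the paper's own route: compute $\A{k_\alpha}(x) = -(2x+\ln\alpha)/x^2$, observe that for each fixed $x\in\R_+$ the map $\alpha\mapsto\A{k_\alpha}(x)$ is decreasing, 1--1 and onto $\R$, and invoke Corollary~\ref{col:borderscale}. Your extra care at $\alpha=1$ (where $k_1\equiv 1$ is constant and so lies outside $\mathcal{C}^{2\ne}(\R_+)$) addresses a point the paper silently glosses over, and the natural repair — replacing $k_1$ by a representative with $A$-image $-2/x$, e.g.\ $x\mapsto -1/x$, which yields the harmonic mean — is consistent with your reasoning.
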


\begin{proof}
The proof appears to be quite close to the proof of 
Proposition~\ref{prop:PowerMeans}. Indeed, we quickly 
compute
$$
\A{k_\alpha}(x) = -\frac{2x + \ln \alpha}{x^2}\,,
$$
finding that the mapping $\alpha \mapsto \A{k_\alpha}(x)$ is decreasing, 
1--1 and onto for all $x \in \R_{+}$. So the assumption in 
Corollary~\ref{col:borderscale} hold, and hence the family 
$(k_\alpha)_{\alpha \in \R_+}$ generates a decreasing scale 
on $\R_{+}$.
\end{proof}

\textbf{Open problem.} How to unify Theorem~\ref{thm:mainresultL} 
and Theorem~\ref{thm:mainresultR} so as to get a set of conditions 
that would simultaneously be necessary and sufficient? 


\end{document}